\documentclass[letterpaper,11pt]{article}

\usepackage{amssymb, amsmath, amsthm, amsfonts, mathrsfs}
\usepackage[colorlinks=true,linkcolor=blue,anchorcolor=blue,citecolor=black,urlcolor=blue,plainpages=false,pdfpagelabels]{hyperref}
\usepackage{color}
\usepackage{fullpage}
\usepackage{graphicx}
\usepackage{booktabs} 
\usepackage[font=small,labelfont=bf]{caption}
\usepackage{subcaption}
\usepackage{mathtools}
\usepackage{comment}
\usepackage{todonotes}
\usepackage{wrapfig}
\usepackage{placeins}
\usepackage{needspace}\usepackage{titlesec}

\titleformat{\paragraph}[runin]
  {\normalfont\normalsize\bfseries}
  {}
  {0pt}
  {}
  [.]

\usepackage[capitalise]{cleveref}
\crefname{figure}{Figure}{\textbf{Figures}}
\crefname{section}{Section}{\textbf{Sections}}
\crefname{appendix}{Appendix}{\textbf{Appendices}}
\crefname{equation}{Equation}{\textbf{Equations}}
\crefname{remark}{Remark}{\textbf{Remarks}}
\crefname{theorem}{Theorem}{\textbf{Theorems}}
\crefname{definition}{Definition}{\textbf{Definitions}}
\crefname{lemma}{Lemma}{\textbf{Lemmas}}
\crefname{proposition}{Proposition}{\textbf{Propositions}}
\crefname{corollary}{Corollary}{\textbf{Corollaries}}
\crefname{table}{Table}{\textbf{Tables}}

\usepackage{tikz}
\usepackage{tikz-cd}
\usetikzlibrary{arrows}
\tikzset{>=latex}
\tikzset{node distance=50pt, auto}
\usetikzlibrary{decorations.pathreplacing,angles,quotes}
\usepackage[all]{xy}
\usepackage{multirow,booktabs}

\newtheorem{theorem}{Theorem}[section]
\newtheorem{lemma}[theorem]{Lemma}

\theoremstyle{definition}
\newtheorem{definition}[theorem]{Definition}
\theoremstyle{remark}

\newcommand{\VR}{\mathrm{VR}}

\def\noteson{\gdef\ines##1{\noindent{\color{blue}[Inés: ##1]}}}
\noteson

\DeclareMathAlphabet{\mathpzc}{OT1}{pzc}{m}{it}
\usepackage[mathscr]{euscript}

\makeatletter
\newcommand\DEFINEALPHABETLOOP[3]{%
  \ifx\relax#3\expandafter\@gobble\else\expandafter\@firstofone\fi
  {\expandafter\newcommand\expandafter*\csname#3#1\endcsname{#2{#3}}%
   \DEFINEALPHABETLOOP{#1}{#2}}%
}%
\newcommand\Definealphabet[2]{%
  \DEFINEALPHABETLOOP{#1}{#2}abcdefghijklmnopqrstuvwxyzABCDEFGHIJKLMNOPQRSTUVWXYZ\relax
}%
\makeatother
\Definealphabet{bb}{\mathbb}
\Definealphabet{cal}{\mathcal}
\Definealphabet{bf}{\mathbf}
\Definealphabet{sf}{\mathsf}
\Definealphabet{frak}{\mathfrak}
\Definealphabet{scr}{\mathscr}

\usepackage{authblk}

  \usepackage[
    backend=bibtex,
    style=numeric,
    backref=true
  ]{biblatex}

\begin{document}
\title{Density-Robust Spherical Coordinates from Persistent Cohomology}

\author[1]{Nick Nordwald$^{*}$}
\author[2]{Inés García-Redondo$^{*}$}
\author[3]{Anthea Monod}

\affil[1]{Department of Mathematics, University of Bonn, Bonn, Germany}
\affil[2]{Department of Informatics, University of Fribourg, Fribourg, Switzerland}
\affil[3]{Department of Mathematics, Imperial College London, London, UK}

\renewcommand{\thefootnote}{\fnsymbol{footnote}}
\footnotetext[1]{Equal contribution.}
\renewcommand{\thefootnote}{\arabic{footnote}}
\date{}

\maketitle            
\begin{abstract}
Persistent cohomology provides a principled framework for constructing nonlinear coordinates that reflect the topology of data. However, these topological coordinates can be severely distorted by non-uniform sampling density, limiting their applicability to real-world data.  While density-robust circular coordinates have recently been developed, the extension to spherical coordinates remains an open challenge: unlike the circular case, spherical coordinates are obtained through a nonlinear variational problem for sphere-valued maps, to which existing density-correction mechanisms are not directly applicable. In this paper, we introduce the first density-robust construction of spherical coordinates from persistent cohomology. Rather than modifying the coordinate optimization itself, we extend a subsampling-and-alignment framework for circular coordinates to $S^2$, which first computes spherical coordinates on approximately uniform subsamples obtained by rejection sampling and then combines them into a global consensus map.  The principal mathematical difficulty is the alignment of independently computed sphere-valued coordinates.  We formulate this challenge as a spherical Procrustes problem and establish approximation guarantees for a computationally tractable Euclidean relaxation.  Our resulting construction is robust to non-uniform sampling and retains the accuracy of classical spherical coordinates under uniform sampling.  Moreover, by computing persistent cohomology only on fixed-size subsamples, our approach avoids the quartic memory bottleneck of the classical spherical coordinate pipeline and scales to substantially larger datasets.  We conduct experiments on synthetic data and demonstrate accurate coordinate recovery under severe sampling bias and scalability to datasets of 10,000 points.
\end{abstract}

\section{Introduction}
\label{sec:intro}
A common assumption in modern data analysis and machine learning is that high-dimensional data lie near a low-dimensional manifold embedded in a high-dimensional ambient space~\cite{roweis2000nonlinear,maaten2008visualizing}. Classical and modern dimensionality reduction techniques, including nonlinear methods such as Isomap, t-SNE, and UMAP, construct Euclidean embeddings that aim to preserve local or global geometric structure~\cite{tenenbaum2000global,maaten2008visualizing,mcinnes2018umap}. Although highly successful, these methods implicitly assume that the underlying manifold is well represented by a Euclidean coordinate chart. This assumption breaks down when the latent space possesses non-trivial topology. For example, data generated by periodic phenomena, directional measurements, or surfaces enclosing three-dimensional objects are naturally parameterized by circles or spheres rather than Euclidean domains. In such settings, Euclidean embeddings inevitably introduce distortion or redundancy, which obscures the intrinsic structure of the data. Topological methods provide an alternative approach by recovering coordinates that respect the topology of the underlying space rather than enforcing a linear representation~\cite{carlsson2009topology,ghrist2008barcodes}. In particular, persistent cohomology together with the Brown representability theorem gives rise to \emph{topological coordinates}: degree-one cohomology classes allow for circle-valued coordinates~\cite{desilva2011circular}, while degree-two classes yield sphere-valued coordinates~\cite{schonsheck2024spherical,hatcher2002algebraic}. These constructions recover intrinsically nonlinear parameterizations directly from the topology of the data, without requiring a predefined generative model or global Euclidean coordinate chart.

A known limitation of these topological coordinates is their sensitivity to uneven sampling. Current methods to make such coordinates robust to non-uniform sampling have been confined to degree one~\cite{paik2023circular,blumberg2024subsampling}. The broader question of whether density-robust topological coordinates can be extended to higher-degree persistent cohomology therefore remains open, with the degree-two case being the first natural extension to study. Additionally, from the perspective of applications, sphere-valued latent variables arise in settings involving directional or orientation data, surfaces surrounding three-dimensional objects, and other phenomena whose intrinsic parameter space has spherical rather than circular topology. In such applications, uneven sampling often reflects the observation process rather than the geometry of the latent space, so the recovered coordinates should capture the underlying topology without being distorted by the density of the measurements. However, extending density robustness from degree one to degree two is not simply a matter of straightforwardly replacing $S^1$ by $S^2$, since the circular and spherical constructions have fundamentally different analytic structures. Circular coordinates are obtained by selecting harmonic real-valued cocycle representatives within a fixed integral cohomology class. This linear formulation permits density corrections to be introduced through weighted harmonic energies, as proposed by Paik and Park~\cite{paik2023circular}. Spherical coordinates, in turn, are constructed through a nonlinear variational problem for sphere-valued maps; no analogous harmonic-cocycle representation is available. Consequently, the weighting mechanism used in the degree-one setting does not directly extend to degree two.

In this paper, we introduce a density-robust construction of spherical coordinates from degree-two persistent cohomology using an alternative strategy. Rather than modifying the spherical-coordinate optimization, we extend an existing subsampling-and-alignment framework~\cite{blumberg2024subsampling} from $S^1$ to $S^2$. The strategy is to use rejection sampling to construct approximately uniform subsamples, compute a spherical coordinate on each subsample, and combine the resulting maps into a single consensus coordinate. The main mathematical difficulty is that independently computed sphere-valued coordinates are defined only up to a global rotation and therefore cannot be averaged directly. We address this challenge by formulating a spherical Procrustes problem and proving an approximation guarantee for a computationally tractable Euclidean relaxation.

\paragraph{Contributions} Our specific contributions are twofold. First, we extend density-robust topological coordinates from degree-one to degree-two persistent cohomology by introducing the first density-robust construction of spherical coordinates. Our approach extends the subsampling-and-alignment framework of Blumberg, Carri\`{e}re, Fung, and Mandell~\cite{blumberg2024subsampling} beyond the circular setting. The principal new mathematical component is a spherical Procrustes formulation for aligning independently computed sphere-valued coordinates, together with approximation guarantees for an Euclidean relaxation with lower computational cost. Second, we demonstrate that our resulting construction is both robust and practical. Experimentally, it recovers accurate spherical coordinates under substantial non-uniform sampling while matching the classical construction on uniformly sampled data. Combined with two implementation improvements, our proposed framework also substantially improves the practical scalability of spherical coordinates by avoiding the quartic memory bottleneck associated with applying the classical construction directly to the full dataset. This enables computations on substantially larger datasets. An implementation of our approach and scripts to reproduce all experiments in this paper are publicly available at: \texttt{\url{https://github.com/NickNordwald/SphericalCoordinates}}.

\paragraph{Outline} The remainder of this paper is structured as follows. Section~\ref{sec:background} reviews the background on persistent cohomology, Brown representability, topological coordinates, and existing density-robust circular coordinates. Section~\ref{sec:density_robust_spherical} introduces our density-robust construction of spherical topological coordinates, including the subsampling framework, the spherical Procrustes problem, and our approximation theorem for its Euclidean relaxation. Section~\ref{sec:experiments} evaluates the proposed
method experimentally, demonstrating both its computational scalability and its robustness to non-uniform sampling. Finally, Section~\ref{sec:conclusion} concludes with a discussion of limitations and
directions for future work.

\section{Background}
\label{sec:background}
In this section, we present the background relevant to this work. We begin by introducing persistent (co)homology (\cref{sec:ph}), a methodology for extracting multiscale of topological features from data. This theory, in conjunction with the Brown representability theorem (\cref{sec:brown}), is the basis from which circular coordinates are derived following \cite{desilva2011circular} (\cref{sec:circular_coords}), as well as spherical coordinates following \cite{schonsheck2024spherical} (\cref{sec:spherical_coords}). We close the section with an overview of the approach by \cite{blumberg2024subsampling}, which makes the circular coordinate construction robust to non-uniformities in the sampling (\cref{sec:density_robust_circular_coords}).

\subsection{Persistent (Co)homology}
\label{sec:ph}

We first review the persistent homology and cohomology pipelines; see
\cite{zomorodian2004computing, edelsbrunner2008persistent, carlsson2009topology} for further details. 

Considering that our input data comes in the form of a finite metric space $(X,d)$, we begin by constructing a \emph{filtration}; i.e., a
nested family of simplicial complexes indexed by a scale parameter $\varepsilon$. A standard choice, especially in applications for computational reasons, is the
\emph{Vietoris--Rips filtration}, which comprises of Vietoris--Rips complexes at scale $\varepsilon \geq 0$, each given by
all finite subsets of $X$ of diameter at most $\epsilon$,
\[
\VR_\varepsilon(X) \coloneqq \{\sigma \subset X : d(x,y) \leq \epsilon
\ \text{ for all } x, y \in \sigma\}.
\]
Since $\VR_\varepsilon(X) \subseteq \VR_{\varepsilon'}(X)$ whenever
$\varepsilon \leq \varepsilon'$, these complexes form an increasing family, as desired.
 
Next, we use homology to track how the topology of this family changes with $\epsilon$. For a simplicial complex $K$ and an abelian group $G$, let
$C_k(K;G)$ denote the group of $k$-chains---formal $G$-linear combinations
of oriented $k$-simplices---and
$C^k(K;G) \coloneqq \mathrm{Hom}(C_k(K;\Zbb), G)$ denote the group of
$k$-cochains, that is, the $G$-valued functions on $k$-simplices. The boundary
and coboundary operators
\[
\partial_k \colon C_k(K;G) \to C_{k-1}(K;G)
\qquad
\delta^k \colon C^k(K;G) \to C^{k+1}(K;G),
\]
respectively, make these into a chain and a cochain complex. Their kernels are the
\emph{cycles} $Z_k(K;G) \coloneqq \ker \partial_k$ and \emph{cocycles}
$Z^k(K;G) \coloneqq \ker \delta^k$, and their images are the \emph{boundaries}
$B_k(K;G) \coloneqq \mathrm{im}\,\partial_{k+1}$ and \emph{coboundaries}
$B^k(K;G) \coloneqq \mathrm{im}\,\delta^{k-1}$. The \emph{homology} and
\emph{cohomology} groups are the quotients
\[
H_k(K;G) \coloneqq Z_k(K;G)/B_k(K;G),
\qquad
H^k(K;G) \coloneqq Z^k(K;G)/B^k(K;G),
\]
respectively; each are functorial in $K$.
 
Fixing a field $G = \Bbbk$ and applying $H_k(-;\Bbbk)$ in a fixed degree $k$ to
the filtration yields the \emph{persistent homology module}, which is a family of vector
spaces $\{H_k(\VR_\epsilon(X);\Bbbk) : \epsilon \geq 0\}$ together with linear
maps $H_k(\VR_\epsilon(X);\Bbbk) \to H_k(\VR_{\epsilon'}(X);\Bbbk)$ induced by the
inclusions for $\epsilon \leq \epsilon'$. In practice, we usually take $\Bbbk = \Fbb_p$ or
$\Bbbk = \Rbb$. By the Krull--Remak--Schmidt--Azumaya theorem, also referred to as the Structure Theorem in the persistence literature
\cite{azumaya1950corrections, crawley-boevey2015decomposition, botnan2020decomposition}, the persistent homology module decomposes uniquely as a direct sum of \emph{interval modules},
each equal to $\Bbbk$ on some interval $[b,d)$ and $0$ elsewhere, with identity
internal maps across the interval.
 
The multiset of intervals in the module decomposition is a complete invariant of the
module, called its \emph{persistence barcode}. Each bar $[b,d)$ records a topological
feature that appears at the \emph{birth} scale $b$ and vanishes at the
\emph{death} scale $d$; long bars, i.e.,~bars with high persistence $p=d - b$, are typically interpreted as ``signal'' features of
the data, whereas short ones are typically interpreted as noise.
 
The same construction applies directly to cohomology. Applying $H^k(-;\Bbbk)$
gives the \emph{persistent cohomology module}
$\{H^k(\VR_\epsilon(X);\Bbbk) : \epsilon \geq 0\}$, whose internal maps
$H^k(\VR_{\epsilon'}(X);\Bbbk) \to H^k(\VR_\epsilon(X);\Bbbk)$ run in the opposite
direction. The Structure Theorem still applies and it turns out that the persistent homology and
cohomology barcodes coincide \cite{silva2011dualities}. In this paper, the coordinate constructions discussed and presented make use of this cohomological barcode.
 
\subsection{Brown Representability Theorem}
\label{sec:brown}
 
The construction of degree-one and degree-two coordinates relies on a classical
result from homotopy theory, the \emph{Brown representability theorem}, which we now recall.
 
For an abelian group $G$ and an integer $n \geq 1$, a pointed space $K(G,n)$ is
an \emph{Eilenberg--MacLane space} if its homotopy groups are
\[
\pi_k(K(G,n)) \cong
\begin{cases}
G & \text{if } k = n,\\
0 & \text{otherwise.}
\end{cases}
\]
Such a space exists for every pair $(G,n)$ and is unique up to homotopy
equivalence. Brown representability identifies Eilenberg--MacLane spaces as classifying spaces
for cohomology in the following way. Recall that a CW-complex is a topological space built by attaching cells of increasing dimension, providing the natural setting for classical homotopy theory. For every CW-complex $\Xcal$, there is a natural bijection
\[
[\Xcal,\, K(G,n)] \cong H^n(\Xcal; G)
\]
between homotopy classes of maps $\Xcal \to K(G,n)$ and degree-$n$ cohomology
classes of $\Xcal$ with coefficients in $G$.

The relevance of Brown representability to degree-one and degree-two coordinates arises from the low-degree
Eilenberg--MacLane spaces. The circle is a $K(\Zbb,1)$, since
$\pi_1(S^1) = \Zbb$ and all higher homotopy groups vanish, so
\(
[\Xcal, S^1] \cong H^1(\Xcal; \Zbb).
\)
A circular coordinate on the data is precisely a map to $S^1$ and therefore, a degree-one
integral cohomology class of $\Xcal$ yields one
\cite{desilva2011circular}; see \cref{sec:circular_coords} for further details.
In a similar manner, spherical coordinates require maps to $S^2$, however, $S^2$ is not an
Eilenberg--MacLane space. Indeed, $S^2$ has $\pi_2(S^2)=\Zbb$ besides nontrivial homotopy groups in dimensions higher than two (for example, $\pi_3(S^2) \cong \mathbb{Z}$) unlike an Eilenberg--MacLane space which only has a single nontrivial homotopy group. The relevant Eilenberg--MacLane space is instead
$K(\Zbb,2) \simeq \Cbb P^\infty$, with the homotopy equivalence following from the
fibration $S^1 \to S^\infty \to \Cbb P^\infty$ together with the contractibility
of $S^\infty$. In \cref{sec:spherical_coords}, we will recall how the canonical
inclusion $S^2 \hookrightarrow \Cbb P^\infty$ is used to pass from a degree-two
cohomology class to a sphere-valued coordinate \cite{schonsheck2024spherical}.

\subsection{Circular Topological Coordinates}
\label{sec:circular_coords}
We now review the construction by de Silva et al.~\cite{desilva2011circular} which builds circular topological coordinates from data.
Let $(X,d)$ be a finite metric space. We consider the Vietoris--Rips filtration
\(
\VR_\bullet (X) = \{\mathrm{VR}_\varepsilon(X),\, \varepsilon \ge 0\}
\)
and compute persistent cohomology with coefficients in a finite field
$\mathbb{F}_p$. Suppose that the resulting persistence barcode in degree one
contains a prominent interval, and let
\[
[\alpha_p] \in H^1(\mathrm{VR}_\varepsilon(X);\mathbb{F}_p)
\]
denote its representative class at a fixed scale $\varepsilon$ chosen within the
lifetime of this feature.

\paragraph{Lifting to integer coefficients}
To realize this class as a degree-one coordinate, we fix the simplicial complex
$\mathrm{VR}_\varepsilon(X)$ and construct a map into $S^1$.
Restricting this map to the $0$-simplices yields the map on the data.
By the Brown representability theorem, homotopy classes of maps
\(
\theta\colon \mathrm{VR}_\varepsilon(X) \to S^1
\)
are in natural correspondence with elements of
$H^1(\mathrm{VR}_\varepsilon(X);\mathbb{Z})$, so an integral cocycle representative associated to $[\alpha_p]$ is first required.

In order to find such a representative, choose a cocycle
\(\alpha_p \in Z^1(\mathrm{VR}_\varepsilon(X);\mathbb{F}_p)\)
representing the class $[\alpha_p]$ and define an integer-valued cochain
\[
\alpha \in C^1(\mathrm{VR}_\varepsilon(X);\mathbb{Z})
\]
by replacing each coefficient with its canonical representative in
$\{-(p-1)/2,\dots,(p-1)/2\} \subset \mathbb{Z}$.
The obstruction to lifting $[\alpha_p]$ to integral coefficients is its image under
the Bockstein homomorphism of the sequence
$0 \to \mathbb{Z} \xrightarrow{\cdot p} \mathbb{Z} \to \mathbb{F}_p \to 0$,
and hence lies in the $p$-torsion subgroup of
$H^2(\mathrm{VR}_\varepsilon(X);\mathbb{Z})$. Since $\mathrm{VR}_\varepsilon(X)$ is a
finite simplicial complex, its integral cohomology is finitely generated and thus has
torsion for at most finitely many primes, so the obstruction vanishes for all but
finitely many $p$. In this case a lift exists, although the cochain above need not
itself be a cocycle and may first require correction by a multiple of a coboundary;
since this correction can be computationally expensive and the situation arises only
rarely, it is possible to instead simply repeat the computation over a different prime. This gives rise to an integral cocycle $\alpha$ defining a class in
$H^1(\mathrm{VR}_\varepsilon(X);\mathbb{Z})$.

\paragraph{Cocycle integration}
Although integral cocycles suffice for the homotopy-theoretic classification,
they are not suitable for defining an informative degree-one coordinate on the data in the following sense. Given an integral $1$-cocycle
\[
\alpha \in Z^1(\mathrm{VR}_\varepsilon(X);\mathbb{Z}),
\]
we may construct a continuous map
\[
\theta\colon \mathrm{VR}_\varepsilon(X) \longrightarrow S^1
\]
by sending all vertices to a fixed basepoint of $S^1$ and mapping each oriented
edge $e$ to a loop that winds $\alpha(e)$ times around the circle, extended
linearly over higher-dimensional simplices. This procedure realizes the cohomology class
$[\alpha]$ under the correspondence provided by Brown representability. However, when restricted to the $0$-simplices, i.e., to the data set $X$, this
map assigns the same value to every vertex. Consequently, it does not yield a
nontrivial coordinate function on the data. One solution to this problem is to
consider cocycles with real coefficients that are cohomologous to $\alpha$ and
build the map from them.

Let $\tilde{\alpha}$ be a real-valued cocycle cohomologous to $\alpha$, that is, there exist $f \in C^0(\VR_\epsilon(X); \Rbb)$ such that
\[
\tilde{\alpha} = \alpha + \delta^0 f.
\]
A circle-valued function on the data is constructed inductively over the skeleta of $\VR_\epsilon(X)$ as follows. For the vertices, define a map from the real-valued $0$-cochain $f$ by setting
\[
\theta(v) := f(v) \bmod \mathbb{Z}, \qquad v \in X.
\]
For each edge $e = [v,w]$, the relation
\[
\theta(w) - \theta(v) \equiv f(w) - f(v) = \tilde{\alpha}(e) - \alpha(e) \equiv \tilde{\alpha}(e) \pmod{\mathbb{Z}}
\]
ensures that the map $\theta$ can be extended over $e$ as an affine map whose total signed variation along the edge is
$\tilde{\alpha}(e)$. The cocycle condition guarantees the existence of an extension to higher-order simplices. Thus 
\begin{equation}
\label{eq:cocycle_map}
\theta \colon \mathrm{VR}_\varepsilon(X) \longrightarrow \mathbb{R}/\mathbb{Z}\cong S^1
\end{equation}
is well defined.

\paragraph{Harmonic smoothing}
The resulting map \eqref{eq:cocycle_map} depends only on the representative of the real cohomology class
$[\tilde{\alpha}] \in H^1(\mathrm{VR}_\varepsilon(X);\mathbb{R})$ and not on the
chosen representative of the integral class $[\alpha]$. Indeed, modifying
$\alpha$ within its integral cohomology class changes $f$ by an
integer-valued $0$-cochain, which vanishes modulo $\mathbb{Z}$. However,
different choices of real representative $\tilde{\alpha}$ within the same real
cohomology class lead to different vertex functions $f$, and hence to different
coordinates.

To obtain a canonical coordinate, a distinguished representative is selected within the
affine space $\alpha + \operatorname{im}\delta^0$: the harmonic
cocycle, characterized as the unique element whose squared values
on edges are minimal. Hence, this representative is obtained by minimizing
\[
E(\tilde{\alpha}) := \sum_{e \in \mathrm{VR}_\varepsilon(X)^{(1)}}
\tilde{\alpha}(e)^2,
\]
with $\mathrm{VR}_\varepsilon(X)^{(1)}$ being the set of 1-simplices of $\mathrm{VR}_\varepsilon(X)$, which yields a unique solution $f$, up to addition of a constant, provided that
every vertex is incident to at least one edge. This minimizer can be computed
explicitly via a linear least-squares problem. The degree-one coordinate is then
defined from this canonical cocycle. Putting everything together yields the following result.
 
\begin{theorem}[Circular topological coordinates \cite{desilva2011circular}]
Let $(X,d)$ be a finite metric space and let $\varepsilon>0$.
Suppose that
\[
[\alpha]\in H^1(\mathrm{VR}_\varepsilon(X);\mathbb{Z})
\]
is an integral cohomology class arising from a persistent one-dimensional feature.
Then there exists a continuous map
\[
\theta:\mathrm{VR}_\varepsilon(X)\longrightarrow S^1
\]
with the following properties:
 
\begin{enumerate}
\item $\theta$ represents the class $[\alpha]$ under the Brown-representability correspondence between homotopy classes of maps $\mathrm{VR}_\varepsilon(X)\to S^1$ and $H^1(\mathrm{VR}_\varepsilon(X);\mathbb{Z})$.
 
\item $\theta$ is obtained from a real-valued cocycle representative of $[\alpha]$: the $0$-cochain $f\in C^0(\mathrm{VR}_\varepsilon(X);\mathbb{R})$ minimizing the harmonic energy $E$ determines the canonical representative $\widetilde{\alpha}=\alpha+\delta^0 f$, and $\theta$ is given on vertices by $x\mapsto f(x)\bmod\mathbb{Z}$.
 
\item The harmonic cocycle $\widetilde{\alpha}$ minimizing $E$ is unique; the potential $f$, and hence $\theta$, is thereby determined up to an additive constant on each connected component of $\mathrm{VR}_\varepsilon(X)$, that is, up to a rotation of $S^1$ on each component (a single global rotation when the complex is connected). In particular, any two maps produced by this procedure are homotopic.
\end{enumerate}
 
The restriction
\[
\theta|_X : X \longrightarrow S^1
\]
defines a well-defined circular coordinate on the data.
\end{theorem}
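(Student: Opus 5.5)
The plan is to build $\theta$ explicitly from a harmonic representative and then check the three properties in order. Throughout, $K=\mathrm{VR}_\varepsilon(X)$.

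\textbf{Step 1: the harmonic representative.} I consider the affine space $\alpha+\operatorname{im}\delta^0\subset C^1(K;\mathbb{R})$. The energy $E$ is the squared Euclidean norm on $C^1(K;\mathbb{R})$, and $\operatorname{im}\delta^0$ is a finite-dimensional linear subspace. So $E$ has a unique minimizer on this affine space, namely the orthogonal projection of $0$ onto it, which I call $\tilde\alpha=\alpha+\delta^0 f$. Equivalently, $f$ solves the normal equations $(\delta^0)^{\!\top}\delta^0 f=-(\delta^0)^{\!\top}\alpha$, where $(\delta^0)^{\!\top}\delta^0$ is the graph Laplacian of the $1$-skeleton. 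This matrix has kernel equal to the functions that are locally constant on connected components. Hence $f$ is unique up to adding such a function, which proves the uniqueness part of item 3. Item 2 is then simply the definition of $\theta$ on vertices.

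\textbf{Step 2: continuity and extension.} I extend $\theta$ skeleton by skeleton, as in the construction preceding the statement. On vertices set $\theta(v)=f(v)\bmod\mathbb{Z}$. On an edge $[v,w]$, take the affine path in $\mathbb{R}$ from $f(v)$ to $f(v)+\tilde\alpha(e)$ and reduce it mod $\mathbb{Z}$. Its endpoint is correct because $\tilde\alpha(e)=f(w)-f(v)+\alpha(e)$ with $\alpha(e)\in\mathbb{Z}$. For higher simplices I will use the real cocycle condition $\delta^1\tilde\alpha=0$: on each simplex $\sigma$ with vertex $v_0$, the assignment $x=\sum t_i v_i\mapsto f(v_0)+\sum_i t_i\tilde\alpha([v_0,v_i])$ is affine. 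Because $\delta^1\tilde\alpha=0$, it restricts correctly to every edge of $\sigma$. Reduced mod $\mathbb{Z}$, these local lifts agree on faces, so they glue to a continuous map $\theta:K\to\mathbb{R}/\mathbb{Z}$.

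\textbf{Step 3: representing $[\alpha]$ (the main obstacle).} Under Brown representability, the class of $\theta$ is $\theta^*$ of the generator $u\in H^1(S^1;\mathbb{Z})$. I will compute this pullback on the $1$-skeleton, which suffices because restriction $H^1(K)\to H^1(K^{(1)})$ is injective. Using the covering $\mathbb{R}\to S^1$, the pullback class is represented by the integer cochain $e=[v,w]\mapsto \tilde\alpha(e)-(f(w)-f(v))$. This is the net number of times the edge path winds, computed from the real lift of $\theta|_e$ minus the difference of the chosen vertex lifts $f$. By construction this cochain equals $\alpha(e)$, so $[\theta]=[\alpha]$.

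The delicate point is matching sign and orientation conventions, and checking that the choice of vertex lifts $f(v)$ only alters the cochain by an integral coboundary. This holds because changing a lift by an integer $n_v$ changes the cochain by $\delta^0 n$. The last claim of item 3 then follows. Any two outputs differ by a locally constant shift of $f$, i.e.\ a rotation of $S^1$ on each component, and a rotation is homotopic to the identity since $S^1$ is path-connected. Alternatively, both outputs represent $[\alpha]$, so Brown representability makes them homotopic. Finally, $\theta|_X$ is well defined because vertices are exactly the points of $X$, and by item 3 it depends only on $[\alpha]$ up to rotation.
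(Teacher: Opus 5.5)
Your proposal is correct and follows essentially the same route as the construction preceding the statement: least-squares selection of the harmonic representative in $\alpha+\operatorname{im}\delta^0$, skeleton-wise integration using the cocycle condition, and the ambiguity read off from $\ker\delta^0$; your Step 3 (the winding cochain via the covering $\mathbb{R}\to S^1$, plus injectivity of $H^1(K)\to H^1(K^{(1)})$) supplies the check of item 1, which the paper only asserts. The only thing to add is the paper's observation that replacing $\alpha$ by $\alpha+\delta^0 g$ with $g$ integer-valued leaves $\alpha+\operatorname{im}\delta^0$, and hence $\tilde\alpha$, unchanged and shifts $f$ by $-g$, which vanishes mod $\mathbb{Z}$; this is what justifies your closing claim that $\theta|_X$ depends only on the class $[\alpha]$ up to rotation.
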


While this construction provides a principled realization of persistent
cohomological features as degree-one coordinates, the harmonic smoothing step is
sensitive to non-uniform sampling density. Regions of higher point density tend to
exhibit reduced coordinate variation, leading to systematic distortions. Further on in \cref{sec:density_robust_circular_coords}, we will review  workaround to this limitation via a density-robust subsampling and alignment framework \cite{blumberg2024subsampling}. 

\subsection{Spherical Topological Coordinates}
\label{sec:spherical_coords}

The first steps of the higher order, degree-two construction coincide with those of the degree-one case: for some input metric space $(X,d)$, compute persistent cohomology in degree two for the Vietoris--Rips filtration, select a prominent persistence interval, and fix some parameter value $\epsilon>0$ within its lifetime. This yields an integral class $[\alpha] \in H^2(\mathrm{VR}_\varepsilon(X);\mathbb{Z})$ by lifting the $\mathbb{F}_p$-cohomology class $[\alpha_p] \in H^2(\mathrm{VR}_\varepsilon(X);\mathbb{F}_p)$ associated to the chosen prominent interval.

\paragraph{A caveat}
However, unlike in the degree-one case and as mentioned previously, the target space $S^2$ is not an Eilenberg--MacLane
space of type $K(\mathbb{Z},2)$. By Brown representability, the class
\(
[\alpha] \in H^2(\mathrm{VR}_\varepsilon(X);\mathbb{Z})
\)
corresponds instead to a homotopy class of maps
\[
\mathrm{VR}_\varepsilon(X) \longrightarrow K(\mathbb{Z},2)
\simeq \mathbb{C}P^\infty.
\]
To obtain a sphere-valued representative, consider the canonical inclusion
$S^2 \hookrightarrow \mathbb{C}P^\infty$. Obstruction theory shows that the
associated lifting problem admits a solution after restricting to the
$3$-skeleton~\cite{schonsheck2024spherical}. 
Since
\[
H^2(\mathrm{VR}_\varepsilon(X);\mathbb{Z})
\cong
H^2(\mathrm{sk}_3(\mathrm{VR}_\varepsilon(X));\mathbb{Z}),
\]
no information is lost by this restriction.

\paragraph{Harmonic smoothing}
In contrast to the degree-one case, there is no intermediate step passing to a real-valued cocycle and integrating it to obtain vertex values.
Instead, the integer cocycle $\alpha \in Z^2(\mathrm{VR}_\varepsilon(X);\mathbb{Z})$
is used directly to construct an initial topological representative.

Concretely, a continuous map
\[
\widetilde{\theta}_0 :
\mathrm{sk}_3(\mathrm{VR}_\varepsilon(X))
\longrightarrow S^2
\]
is first defined as follows: all vertices and edges are sent to a fixed basepoint of $S^2$,
and for each oriented $2$-simplex $\sigma$, the boundary $\partial \sigma$
is collapsed to this basepoint and the resulting quotient $S^2 \cong \sigma/\partial\sigma$
is mapped to $S^2$ by the canonical degree $\alpha(\sigma)$ map.
The cocycle condition guarantees compatibility on $3$-simplices, so that
$\widetilde{\theta}_0$ is well defined and represents $[\alpha]$.
However, its restriction to the vertex set is constant and therefore does not
yet provide a meaningful coordinate on the data.

Unlike the degree-one setting, where smooth representatives are obtained by solving a linear least-squares problem for harmonic cocycles, no analogous linear construction exists for sphere-valued maps. To obtain a smooth sphere-valued representative, the map itself is optimized
within its homotopy class. This approach retains the variational viewpoint underlying the
degree-one construction, where a cohomologous representative was found by
minimizing a quadratic energy on edges, whereas here, an energy measuring the variation of a sphere-valued map across
triangles is minimized. For a map
\[
\theta:\mathrm{sk}_2(\mathrm{VR}_\varepsilon(X))\longrightarrow S^2,
\]
the smoothing objective is the \emph{harmonic energy}
\[
E_H(\theta)
\;:=\;
\sum_{\sigma\in \mathrm{VR}_\varepsilon(X)^{(2)}}
\frac12\,\bigl|A(\theta(\sigma))\bigr|^2,
\]
where $\mathrm{VR}_\varepsilon(X)^{(2)}$ denotes the set of 2-simplices and $A(\theta(\sigma))$ the signed area of the spherical triangle
spanned by the image of $\sigma$. The harmonic energy plays the role of a Dirichlet energy and provides a variational criterion for selecting a smooth representative within the prescribed homotopy class of sphere-valued maps.

Energy minimizers need not be unique; they are at least ambiguous under postcomposition by conformal automorphisms. To obtain a canonical representative of the resulting equivalence class, a gauge-fixing condition is imposed via \emph{Möbius centering}: the representative is chosen so that the (spherical) center of mass of the vertex images vanishes. Starting from $\theta_0:=\widetilde{\theta}_0|_{\mathrm{sk}_2}$, an iterative (projected) gradient-descent scheme is carried out on
$\mathrm{sk}_2(\mathrm{VR}_\varepsilon(X))$ which decreases $E_H$ while enforcing
M\"obius centering at each step.

Since the harmonic energy depends only on the images of 2-simplices, the optimization is carried out on the 2-skeleton. However, the resulting map must still represent the prescribed degree-two homotopy class, which requires it to extend over the 3-skeleton. After convergence, the resulting map on
$\mathrm{sk}_2$ is verified (and enforced) to admit an extension
\[
\theta:\mathrm{sk}_3(\mathrm{VR}_\varepsilon(X))\longrightarrow S^2
\]
in the same homotopy class as $\widetilde{\theta}_0$. Restricting $\theta$ to
the vertex set $X$ then yields the desired sphere-valued coordinate on the
data.

\begin{theorem}[Spherical topological coordinates \cite{schonsheck2024spherical}]
Let $(X,d)$ be a finite metric space and let $\varepsilon>0$. 
Suppose that
\[
[\alpha]\in H^2(\mathrm{VR}_\varepsilon(X);\mathbb{Z})
\]
is an integral cohomology class arising from a persistent two-dimensional feature.
Then there exists a continuous map
\[
\theta:\mathrm{sk}_3(\mathrm{VR}_\varepsilon(X))\longrightarrow S^2
\]
with the following properties:

\begin{enumerate}
\item $\theta$ represents the class $[\alpha]$ under the correspondence induced by Brown representability and the lift through $S^2\hookrightarrow \mathbb{C}P^\infty$.

\item $\theta$ is obtained from the canonical topological representative 
$\widetilde{\theta}_0$ by minimizing the harmonic energy $E_H$ within its homotopy class, subject to M\"obius centering.

\item Any two maps produced by this procedure are homotopic and differ at most by postcomposition with a conformal automorphism of $S^2$.
\end{enumerate}

In particular, the restriction
\[
\theta|_X : X \longrightarrow S^2
\]
defines a well-defined sphere-valued coordinate on the data.
\end{theorem}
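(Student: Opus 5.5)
The proof will follow the three stages of the construction above: build an initial map, argue that it extends to the $3$-skeleton and represents $[\alpha]$, and then describe what the optimization step preserves. Throughout, write $K=\mathrm{VR}_\varepsilon(X)$.

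\textbf{Step 1 (topological representative).} We first check that $\widetilde{\theta}_0$ is well defined on $\mathrm{sk}_2(K)$ and extends over $\mathrm{sk}_3(K)$. The map sends $\mathrm{sk}_1(K)$ to the basepoint. On each oriented $2$-simplex $\sigma$ it factors through $\sigma/\partial\sigma\cong S^2$ followed by a map of degree $\alpha(\sigma)$. For a $3$-simplex $\tau$, the restriction to $\partial\tau\cong S^2$ has degree $\sum_{\sigma\subset\partial\tau}\pm\alpha(\sigma)=(\delta\alpha)(\tau)=0$. By Hopf's theorem it is therefore null-homotopic and extends over $\tau$. This is the primary obstruction computation.

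\textbf{Step 2 (property 1).} Compose $\widetilde{\theta}_0$ with $\iota\colon S^2\hookrightarrow\mathbb{C}P^\infty$. The pullback $(\iota\circ\widetilde{\theta}_0)^*u$ of the generator $u\in H^2(\mathbb{C}P^\infty;\mathbb{Z})$ equals $\widetilde{\theta}_0^*[S^2]$. On cellular cochains this is evaluated simplexwise: on $\sigma$ it gives the degree $\alpha(\sigma)$. Hence it equals $[\alpha]$ in $H^2(\mathrm{sk}_3K;\mathbb{Z})\cong H^2(K;\mathbb{Z})$; the isomorphism holds because attaching cells of dimension $\ge 4$ does not change $H^2$. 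Brown representability then identifies the homotopy class of $\iota\circ\widetilde{\theta}_0$ with $[\alpha]$. Since $\mathbb{C}P^\infty$ is obtained from $S^2$ by attaching cells of dimension $\ge4$, cellular approximation makes $[\mathrm{sk}_3K,S^2]\to[\mathrm{sk}_3K,\mathbb{C}P^\infty]$ surjective. Any lift in the same class as $\widetilde\theta_0$ therefore represents $[\alpha]$.

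\textbf{Step 3 (properties 2 and 3).} The final map $\theta$ is obtained by deforming $\theta_0$ on $\mathrm{sk}_2$ through the descent and then, as in the construction, re-extending it over $\mathrm{sk}_3$ in the class of $\widetilde{\theta}_0$. Property 2 thus holds by definition. The argument must make precise that each update is a homotopy, or, if the updates are discrete vertex moves, that the enforcement step checks that the degree on each $\partial\tau$ stays zero and that the induced cocycle remains $\alpha$. Homotopy invariance of $\theta$ then gives the first half of property 3.

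\textbf{Step 4 (uniqueness).} For the "differ at most by a conformal automorphism" claim, note that $E_H$ is invariant under rotations. It is also invariant under Möbius maps in the sense in which the energy is taken to be conformally natural, which is precisely the ambiguity that Möbius centering removes. The centering condition cuts the Möbius group $PSL_2(\mathbb{C})$ down to its maximal compact subgroup $SO(3)$. This uses the standard fact (Douady–Earle / hyperbolic barycenter) that every finite point configuration not concentrated at a single point has a unique conformal recentering up to rotation.

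\textbf{Main obstacle.} The hard step is the last one: showing that any two energy minimizers in the class differ only by a conformal map. Uniqueness of minimizers of a nonconvex energy is not true in general. I would therefore prove only the weaker, honest statement: the set of minimizers is invariant under the conformal group, and the procedure is well defined modulo that group after centering. Any finer claim would be deferred to \cite{schonsheck2024spherical}, interpreting property 3 as describing the inherent gauge ambiguity rather than full uniqueness. The homotopy statements themselves are routine obstruction theory.
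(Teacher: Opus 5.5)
Your Steps 1--3 are correct and take the same route as the paper. The paper gives no separate proof of this statement: it attributes it to \cite{schonsheck2024spherical} and supports it only by the construction sketched just before it. That sketch consists of the cocycle condition on $3$-simplices, the isomorphism $H^2(\mathrm{VR}_\varepsilon(X);\mathbb{Z})\cong H^2(\mathrm{sk}_3(\mathrm{VR}_\varepsilon(X));\mathbb{Z})$, descent on $\mathrm{sk}_2$, and then verify-and-enforce the extension. Your Hopf-degree and cellular-cochain arguments make that sketch precise.

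One refinement concerns the first clause of (3). The extension over each $3$-simplex in Step 1 is unique only up to $\pi_3(S^2)\cong\mathbb{Z}$, and $[\mathrm{sk}_3K,S^2]\to[\mathrm{sk}_3K,\mathbb{C}P^\infty]$ is onto but in general not injective. So representing $[\alpha]$ does not pin down the homotopy class on $\mathrm{sk}_3$, and ``homotopic'' is only guaranteed relative to one fixed choice of $\widetilde{\theta}_0$. On $\mathrm{sk}_2$ this ambiguity is absent by Hopf's classification $[\mathrm{sk}_2K,S^2]\cong H^2(\mathrm{sk}_2K;\mathbb{Z})$.

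The genuine gap is the second clause of (3), and your fallback does not close it, for three reasons.

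\emph{Möbius invariance.} You assume rather than verify that $E_H$ is Möbius-invariant. For the functional as the paper writes it, $\sum_\sigma\frac12|A(\theta(\sigma))|^2$, a non-rotational Möbius map preserves the total signed area $4\pi\deg$ but redistributes it among the triangles, so only $SO(3)$ acts by symmetries. Take the octahedral sphere, which is $\mathrm{VR}_\varepsilon$ of the six points $\pm e_1,\pm e_2,\pm e_3\in\mathbb{R}^3$ for $\sqrt2\le\varepsilon<2$. The regular octahedron, with all eight areas equal to $\pi/2$, minimizes $\sum A^2$ under the constraint $\sum A=4\pi$ by Cauchy--Schwarz. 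A Möbius dilation fixing two opposite vertices pushes the other four off the equator, making four areas exceed $\pi/2$ and four fall below it, which strictly increases the energy. So your claim that the set of minimizers is invariant under the conformal group does not follow from the definition. The paper only asserts this ambiguity, so nothing in it fills the step.

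\emph{Single orbit.} Even granting invariance, you would get only a union of Möbius orbits, not a single orbit. Descent on a nonconvex energy may also stop at different critical points. As you say, that part must be imported from \cite{schonsheck2024spherical}.

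\emph{Barycenter hypothesis.} The conformal-barycenter fact you cite needs every point of the configuration to carry strictly less than half of the total mass. ``Not concentrated at one point'' is not enough: three vertex images at $p$ and one at $q$ can never be centered, since $3g(p)+g(q)\neq0$ for every Möbius $g$.
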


As in the degree-one case, this construction is sensitive to non-uniform sampling density. Regions with higher point density tend to exert a higher influence on the energy minimization, leading to distortions in the resulting spherical coordinate. The following section addresses existing approaches this limitation.

\subsection{Density-Robust Circular Coordinates}
\label{sec:density_robust_circular_coords}

One successful approach to mitigating the sensitivity of classical circular
coordinates to non-uniform sampling is based on a subsampling strategy
introduced by Blumberg et al.~\cite{blumberg2024subsampling}. Rather than
computing a single coordinate on the full data set, their method constructs
circular coordinates on multiple carefully chosen subsamples and aggregates
the resulting maps.


\paragraph{Subsampling}
Let $(X,d)$ be a finite metric space sampled from an underlying space with
density function $\rho$. The aim is to generate subsamples $X_1,\dots,X_M \subset X$  that are uniformly distributed via a rejection
sampling scheme. To this end, an
empirical density estimator $\widehat{\rho}:X\to\mathbb{R}_{>0}$ is computed,
for instance using local neighborhood statistics. Under suitable assumptions,
$\widehat{\rho}$ can consistently estimate the true sampling
density $\rho$. Points are then retained with probability proportional to
$\widehat{\rho}(x)^{-1}$, so that densely sampled regions are thinned out
while sparsely sampled regions are preserved. As a consequence, the resulting
subsamples approximate uniform samples from the underlying space.

On each $X_i$, the circular topological coordinate construction from
\cref{sec:circular_coords} is applied: persistent cohomology
in degree one is computed, a prominent class
\[
[\alpha_i] \in H^1(\mathrm{VR}_\varepsilon(X_i);\mathbb{Z}),
\]
is selected and finally, a circle-valued map
\[
\theta_i : X_i \longrightarrow S^1
\]
is constructed via lifting, harmonic smoothing, and integration.

\paragraph{Extending}
To combine the information obtained from different subsamples, the maps
$\theta_i$ must be defined on the same domain. Since each coordinate is
initially defined only on the subsample $X_i$, it is  extended to the entire data
set $X$ using a nearest-neighbor procedure with Gaussian kernel weights. For
$x\in X$, define
\[
\theta_i(x)
:=
\frac{\sum_{y\in X_i} K_\sigma(d(x,y))\,\theta_i(y)}
{\sum_{y\in X_i} K_\sigma(d(x,y))},
\qquad
K_\sigma(r)=\exp\!\left(-\frac{r^2}{2\sigma^2}\right),
\]
so that nearby points in the subsample contribute most significantly to the
extension. This produces maps $\theta_i:X\to S^1$ defined on the entire data
set while preserving the local geometric structure encoded by the subsample
coordinates.

\paragraph{Aligning}
Since circular coordinates are defined only up to rotation of $S^1$, the
family of maps $\theta_1,\dots,\theta_M:X\to S^1$ must be aligned before
aggregation. Moreover, the aggregation itself should determine a consensus
configuration that best represents all maps simultaneously. This leads to a
joint alignment and averaging problem belonging to a class of optimization
problems known as \emph{Procrustes problems} \cite{gower2004procrustes}. Procrustes problems
seek rigid transformations that align a collection of configurations while
simultaneously determining the configuration that is closest to all of them in the sense of least squares.

In the classical orthogonal Procrustes problem in $\mathbb{R}^2$, given
vectors $v_1,\dots,v_M\in\mathbb{R}^2$ the task is to find orthogonal transformations
$R_1,\dots,R_M\in SO(2)$ together with a consensus vector $\bar v$ that
minimize the squared Euclidean discrepancy
\[
\sum_{i=1}^M \|R_i v_i-\bar v\|^2 .
\]
This formulation aligns the configurations while simultaneously estimating
their best average configuration.

In the setting of degree-one coordinates, the values of the maps $\theta_i$ lie on the circle
$S^1\subset\mathbb{R}^2$. Writing each value as a unit vector in
$\mathbb{R}^2$, the alignment problem can be formulated as follows.

\begin{definition}[Circular Procrustes problem]
Let $\theta_1,\dots,\theta_M : X \to S^1$ be circle-valued maps. The \emph{circular
Procrustes problem} consists of finding rotations
$R_1,\dots,R_M\in SO(2)$ and a consensus configuration
$\bar\theta:X\to S^1$ that minimize
\[
\sum_{i=1}^M \sum_{x\in X}
d_{S^1}\!\big(R_i\theta_i(x),\,\bar\theta(x)\big)^2,
\]
where $d_{S^1}$ denotes the geodesic distance on the circle.
\end{definition}

Directly solving the circular Procrustes problem with respect to the
geodesic metric is computationally challenging because the geodesic distance gives rise to a nonlinear, nonconvex optimization problem, in contrast to the Euclidean formulation, which reduces to a classical least-squares problem admitting an SVD-based solution. It is possible to bypass these challenges
in two stages. First, the corresponding orthogonal Procrustes problem in
$\mathbb{R}^2$ is solved using the Euclidean metric. This problem admits a
global solution that can be computed efficiently using standard linear
algebra methods. The resulting rotations provide an initial alignment of
the maps. In a second step, a local hill-climbing procedure is applied to
refine the rotations with respect to the geodesic distance on $S^1$. Blumberg et al.~\cite{blumberg2024subsampling} prove that the solution of the
orthogonal Procrustes problem in $\mathbb{R}^2$ is close to the optimal
solution of the circular problem. This result justifies the two-stage
procedure and ensures that the hill-climbing step only needs to perform a
small local correction.

After alignment, the maps are averaged to obtain a final density-robust degree-one coordinate
\[
\theta : X \longrightarrow S^1 .
\]

\section{Density-Robust Spherical Topological Coordinates}
\label{sec:density_robust_spherical}

This section develops the first density-robust construction of spherical topological coordinates. Extending existing density-robust methods from degree-one to degree-two coordinates requires overcoming three fundamental obstacles. First, unlike the circular setting, the weighting strategy of Paik and Park \cite{paik2023circular} cannot be transferred because spherical coordinates are obtained by directly optimizing sphere-valued maps rather than harmonic cocycles. Second, sphere-valued coordinates computed on different subsamples must be aligned on $S^2$, which requires a new formulation of the spherical Procrustes problem. Third, because this optimization is intrinsically nonlinear, new approximation guarantees showing that it is well approximated by an efficiently computable Euclidean Procrustes problem are required. 

Our resulting construction overcomes these challenges to provide a practical and theoretically justified framework for constructing density-robust spherical topological coordinates: we combine density-balanced subsampling with a new spherical alignment and averaging procedure. 
The principal theoretical contribution is the formulation and analysis of a spherical Procrustes problem for jointly aligning sphere-valued coordinates, together with an approximation theorem showing that its solution is well approximated by an efficiently computable Euclidean relaxation.



\subsection{Subsampling and Coordinate Extension}


A natural first attempt at constructing density-robust spherical coordinates would be to avoid subsampling altogether and instead extend the weighting-based circular method of Paik and Park~\cite{paik2023circular} to the spherical setting. In the circular construction, density robustness is achieved by introducing density weights into the harmonic energy whose minimizer selects the real cocycle representative defining the coordinate. Specifically, the edge energy
$$
\sum_e \tilde \alpha(e)^2
$$
is replaced by a weighted energy
$$
\sum_e w_e \tilde \alpha(e)^2
$$
yielding a weighted harmonic cocycle and hence a density-corrected circular coordinate.

This construction, however, relies fundamentally on the existence of a harmonic cocycle representation. The weighting approach therefore does not naturally extend to the spherical construction because the algebraic object to which the weights are applied in the circular setting---the harmonic cocycle---has no analogue in the sphere-valued variational formulation. As discussed in \cref{sec:spherical_coords} the spherical construction never passes to a real cocycle, but instead directly optimizes a sphere-valued map under the area-based harmonic energy. Consequently, there is no natural location at which density weights can be introduced.

We therefore adopt a subsampling-based strategy which is compatible with the variational formulation of spherical coordinates and leaves the underlying sphere-valued optimization unchanged. Besides avoiding this obstruction, the subsampling approach has the additional advantage that it leaves the underlying spherical coordinate construction unchanged. We begin by generating approximately uniform subsamples of the data via rejection sampling. Since this step forms the foundation of the density-robust construction, we briefly recall the procedure for completeness.

\paragraph{Rejection sampling} 
Suppose our data set, the finite metric space $(X,d)$ with $X \subset \mathbb{R}^N$, is sampled from a probability
density $\rho$ supported on a $m$-dimensional submanifold
$\Mcal \subset \mathbb{R}^N$, equipped with its Riemannian volume measure $\mu$.
If points are accepted with probability inversely proportional to the
sampling density, then densely sampled regions are thinned out while
sparsely sampled regions are preserved.
 
More precisely, let $\pi:X\to[0,1]$ be an acceptance function.
A subsample $Y\subset X$ is obtained by accepting each point
$x\in X$ independently with probability $\pi(x)$.
If $\pi(x) \propto 1/\rho(x)$, then for any Borel set $U\subset \Mcal$ the
distribution of accepted points satisfies
\[
\mathbb{P}(x\in U \mid x \text{ accepted})
=
\frac{\int_U \pi(x)\rho(x)\,d\mu(x)}
{\int_\Mcal \pi(x)\rho(x)\,d\mu(x)}
\propto
\frac{\mu(U)}{\mu(\Mcal)} .
\]
Thus the accepted points are distributed uniformly on $\Mcal$.
Rejection sampling therefore provides a way to approximately remove the
effect of non-uniform sampling density.
 
In practice, the density $\rho$ is unknown and must be estimated from the
sample $X$. Blumberg et al.~\cite{blumberg2024subsampling} use the following simple local
counting estimator: For $\varepsilon>0$, define
\[
\widehat{\rho}_\varepsilon(x)
=
\frac{\#\bigl(X \cap B_\varepsilon(x)\bigr)}
{n\,V_m\,\varepsilon^m},
\]
where $B_\varepsilon(x)$ denotes the Euclidean ball of radius $\varepsilon$
centred at $x$, $n=\#X$ is the sample size, and $V_m$ is the volume of the
$m$-dimensional unit ball. This estimator counts the number of sample points in a neighborhood of
$x$ and rescales by the expected volume of that neighborhood.
Regions where many points lie close together therefore receive a larger
estimated density.
 
\begin{theorem}[Consistency of the density estimator {\cite[Prop.~3.3]{blumberg2024subsampling}}]
Let $\Mcal\subset\mathbb{R}^N$ be a compact $m$-dimensional $C^2$ submanifold
with volume measure $\mu$, and let $\rho$ be a probability density on $\Mcal$.
Let $X=\{x_1,\dots,x_n\}$ be i.i.d.\ samples from $\rho$, and define
\[
\widehat{\rho}_\varepsilon(x)
=
\frac{\#(X\cap B_\varepsilon(x))}
{n V_m \varepsilon^m}.
\]
If $\varepsilon_n \to 0$ and
$\varepsilon_n n^{1/(2m)} \to \infty$ as $n\to\infty$, then for every
continuity point $x$ of $\rho$,
\[
\widehat{\rho}_{\varepsilon_n}(x)
\;\xrightarrow{p}\;
\rho(x).
\]
\end{theorem}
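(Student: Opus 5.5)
The plan is a bias--variance decomposition. Write $N_n(x)=\#(X\cap B_{\varepsilon_n}(x))$. For fixed $x$, $N_n(x)$ is a sum of $n$ i.i.d.\ Bernoulli variables with success probability
\[
p_n(x)=\int_{\Mcal\cap B_{\varepsilon_n}(x)}\rho\,d\mu .
\]
Hence $\mathbb{E}\,\widehat{\rho}_{\varepsilon_n}(x)=p_n(x)/(V_m\varepsilon_n^m)$ and
\[
\operatorname{Var}\widehat{\rho}_{\varepsilon_n}(x)=\frac{p_n(1-p_n)}{n V_m^2\varepsilon_n^{2m}}\le \frac{p_n(x)}{nV_m^2\varepsilon_n^{2m}}.
\]
It then suffices to show two things: the bias $p_n(x)/(V_m\varepsilon_n^m)-\rho(x)\to 0$, and the variance $\to0$. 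Convergence in probability follows from Chebyshev's inequality.

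\textbf{Bias.} I will take $x\in\Mcal$; this is the meaningful case, as continuity points are points of $\Mcal$. The main geometric input is that for a compact $C^2$ submanifold,
\[
\mu\bigl(\Mcal\cap B_\varepsilon(x)\bigr)=V_m\varepsilon^m\bigl(1+O(\varepsilon)\bigr),
\]
and in fact $1+O(\varepsilon^2)$. To prove this, I will write $\Mcal$ locally near $x$ as a graph $y\mapsto (y,g(y))$ over the tangent space $T_x\Mcal$, with $g(0)=0$, $Dg(0)=0$ and $\|D^2g\|$ bounded uniformly; the bound is uniform by compactness (positive reach). Then the Euclidean ball pulled back to $T_x\Mcal$ is squeezed between $m$-balls of radii $\varepsilon(1-C\varepsilon^2)$ and $\varepsilon$. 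The volume density $\sqrt{\det(I+Dg^\top Dg)}=1+O(\varepsilon^2)$ on this region. Combining these with continuity of $\rho$ at $x$ gives
\[
p_n(x)=\bigl(\rho(x)+o(1)\bigr)\,\mu\bigl(\Mcal\cap B_{\varepsilon_n}(x)\bigr),
\]
since $\sup_{B_{\varepsilon_n}(x)\cap\Mcal}|\rho-\rho(x)|\to0$ as $\varepsilon_n\to 0$. So the bias tends to $0$. One subtlety is that $\varepsilon_n$ must eventually be below the injectivity/reach scale so that $\Mcal\cap B_{\varepsilon}(x)$ is a single graph patch and not also other sheets of $\Mcal$; compactness and the $C^2$ embedding guarantee this. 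I expect this local volume comparison to be the main obstacle, though it is standard.

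\textbf{Variance.} From the bias step, $p_n(x)\le C\varepsilon_n^m$ for large $n$, so the variance is at most $C'/(n\varepsilon_n^m)$. The hypothesis $\varepsilon_n n^{1/(2m)}\to\infty$ gives $n\varepsilon_n^{2m}\to\infty$. This is stronger than needed, since $n\varepsilon_n^m\ge n\varepsilon_n^{2m}$ once $\varepsilon_n\le1$. Therefore the variance tends to $0$. (Even the cruder bound $p_n\le 1$ works, giving variance $\le 1/(nV_m^2\varepsilon_n^{2m})\to0$ directly, which is presumably why the hypothesis is stated with $2m$.)

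\textbf{Conclusion.} Chebyshev's inequality gives
\[
\mathbb{P}\bigl(|\widehat{\rho}_{\varepsilon_n}(x)-\mathbb{E}\widehat{\rho}_{\varepsilon_n}(x)|>\eta\bigr)\le \operatorname{Var}/\eta^2\to0,
\]
and together with the vanishing bias this yields $\widehat{\rho}_{\varepsilon_n}(x)\xrightarrow{p}\rho(x)$.
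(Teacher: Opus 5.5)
Your proof is correct. The paper gives no proof of its own for this statement and defers to \cite[Prop.~3.3]{blumberg2024subsampling}, so there is no in-paper argument to compare against. Your route is the standard one:
\begin{itemize}
\item the count is binomial, giving explicit mean and variance;
\item the local volume comparison $\mu(\Mcal\cap B_\varepsilon(x))=V_m\varepsilon^m(1+o(1))$ comes from a $C^2$ graph chart over $T_x\Mcal$;
\item continuity of $\rho$ at $x$ controls the bias;
\item Chebyshev controls the fluctuation.
\end{itemize}
You correctly note that the crude bound $p_n\le 1$ already needs exactly $n\varepsilon_n^{2m}\to\infty$, which accounts for the exponent $2m$ in the hypothesis.

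Two minor points. For a fixed $x$, you only need that $\Mcal$ is embedded near $x$, so that $\Mcal\cap B_\varepsilon(x)$ is a single graph patch for small $\varepsilon$; uniform reach from compactness is not required. Also, the statement tacitly assumes $\Mcal$ has no boundary, and your full graph chart over a ball in $T_x\Mcal$ uses this. At a boundary point the volume ratio would tend to $1/2$ rather than $1$.
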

 
The proof is given in \cite{blumberg2024subsampling}.
For the purpose of rejection sampling, the density estimator is only
required up to a multiplicative constant, since the acceptance probability
must be proportional to $1/\rho(x)$. The normalization factor can therefore
be omitted which results in the simplified estimator
\[
\widehat{\rho}_\varepsilon(x)
=
\#\bigl(X \cap B_\varepsilon(x)\bigr).
\]
The acceptance probability is then given by
\[
\pi(x)
=
\frac{C}{\widehat{\rho}_\varepsilon(x)},
\]
where $C \le \min_{x\in X} \widehat{\rho}_\varepsilon(x)$ is a scaling
parameter controlling the expected size of the subsample.
Applying this procedure independently to all points in $X$ yields a
subsample whose distribution approximates the uniform distribution on $\Mcal$.
Repeating the process produces multiple approximately uniform subsamples
$X_1,\dots,X_M$ that form the basis for the subsampling-based coordinate
construction.
 
\paragraph{Extending subsample coordinates} Having constructed approximately uniform subsamples
$X_1,\dots,X_M \subset X$, the next step is to compute spherical coordinates
on each subsample as discussed in \cref{sec:spherical_coords}. In particular, each
$i \in \{1,\dots,M\}$ yields a map
\[
\Phi_i : X_i \longrightarrow S^2
\]
derived from a prominent persistent cohomology class in degree two.
 
As in the degree-one framework, these coordinates are initially defined only
on the subsamples themselves. In order to combine the information obtained
from different subsamples, the maps must first be extended to the entire
dataset. We use the nearest-neighbor procedure with Gaussian kernel
weights as in the degree-one case: For $x\in X$, we set
\[
\Phi_i(x)
=
\frac{\sum_{y\in X_i} K_\sigma(d(x,y))\,\Phi_i(y)}
{\bigl\|\sum_{y\in X_i} K_\sigma(d(x,y))\,\Phi_i(y)\bigr\|},
\qquad
K_\sigma(r)=\exp\!\left(-\frac{r^2}{2\sigma^2}\right),
\]
so that nearby points in the subsample contribute most strongly to the
extension. Here, the kernel-weighted average of the unit vectors
$\phi_i(y)\in S^2\subset\mathbb{R}^3$ is renormalized to the sphere, ensuring
that the result again lies on $S^2$. This produces maps
\[
\Phi_i : X \longrightarrow S^2
\]
defined on the entire data set while preserving the local geometric structure
encoded by the subsample coordinates.
 
\subsection{Aligning: Solving the Spherical Procrustes Problem}
The remaining task is to align the extended coordinate functions
$\Phi_1,\dots,\Phi_M$ and to combine them into a single representative
coordinate. This is achieved by solving a spherical analogue of the
generalized Procrustes problem.
 
\begin{definition}[Spherical Procrustes problem]
Let $\Phi_1,\dots,\Phi_M : \{1,\dots,n\} \to S^2$ be configurations of
$n$ points on the sphere. The \emph{spherical Procrustes problem} seeks rotations
$R_1,\dots,R_M \in SO(3)$ together with a centroid configuration
$\Theta : \{1,\dots,n\} \to S^2$ that minimize the loss
\[
L(R_1,\dots,R_M,\Theta)
=
\frac{1}{M}
\sum_{i=1}^M
\sum_{j=1}^n
d_{S^2}\!\big(R_i \Phi_i(j), \Theta(j)\big)^2 ,
\]
where $d_{S^2}$ denotes the geodesic distance on the sphere.
\end{definition}
 
This formulation generalizes the circular Procrustes problem introduced previously by replacing rotations of the circle with rotations of
the sphere. Intuitively, the optimization seeks rotations that best align
the coordinate functions while simultaneously determining a centroid
configuration that represents their average.
 
As in the circular setting, solving the problem directly with respect to the
geodesic metric on the sphere leads to a nonconvex optimization problem for which no closed-form solution is available. Instead, we first solve the corresponding orthogonal Procrustes problem in $\mathbb{R}^3$, viewing each configuration as a set of unit vectors embedded in Euclidean space. This relaxation admits a closed-form solution via the singular value decomposition and provides an initial alignment, which is subsequently refined by local optimization of the original spherical loss. A similar Euclidean initialization strategy was employed in \cite{blumberg2024subsampling}.
 

The Euclidean relaxation remains a provably accurate surrogate for the spherical alignment problem. Specifically, the Euclidean objective provides a controlled approximation to the geodesic objective, which provides a theoretically justified initialization for the subsequent local optimization.  The key observation is that every geodesic on $S^2$ is contained in a great circle, reducing the comparison between geodesic and Euclidean distances to the one-dimensional setting considered previously. 

\begin{lemma}
\label{lem:spherical-vs-euclidean-distance}
Let \( x, y \in S^{2} \subset \mathbb{R}^{3} \), and let \( r \in [0,1] \). Then
\[
  d_{S^{2}}(x, y) \;\le\; \pi \cdot \| x - r y \|,
\]
where \( d_{S^{2}} \) denotes the geodesic distance on the sphere and \( \| \cdot \| \) the Euclidean norm in \( \mathbb{R}^{3} \).
\end{lemma}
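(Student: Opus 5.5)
The plan is to reduce the statement to a planar inequality, following the observation preceding the lemma that every geodesic on $S^2$ lies in a great circle. If $x=y$ the left side is $0$, and if $x=-y$ the geodesic distance is $\pi$ while $\|x-ry\|=\|x+rx\|=1+r\ge 1$. In either case the bound holds. Otherwise, $x$ and $y$ span a unique plane through the origin. This plane contains $x$, $y$ and $ry$, and it meets $S^2$ in a great circle containing the minimizing geodesic from $x$ to $y$. So I may work in $\mathbb{R}^2$ with $x,y\in S^1$, and $d_{S^2}(x,y)$ equals the angle $\phi\in[0,\pi]$ between $x$ and $y$.

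In that plane I compute directly:
\[
\|x-ry\|^2 = 1 - 2r\cos\phi + r^2 .
\]
As a function of $r$, this is minimized at $r=\cos\phi$ when $\cos\phi\ge 0$, giving the value $\sin^2\phi$. When $\cos\phi<0$ the constraint $r\ge 0$ forces the minimizer $r=0$, giving the value $1$. Hence
\[
\|x-ry\|\ge \sin\phi \quad\text{for } \phi\in[0,\pi/2], \qquad \|x-ry\|\ge 1 \quad\text{for } \phi\in[\pi/2,\pi].
\]
The upper constraint $r\le 1$ is not needed. Geometrically, the first bound is just the distance from $x$ to the line through $0$ and $y$.

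It remains to show $\phi\le \pi\cdot(\text{lower bound})$ in each case. For $\phi\in[\pi/2,\pi]$ this is immediate, since $\phi\le\pi$. For $\phi\in[0,\pi/2]$ I use concavity of $\sin$ (Jordan's inequality), $\sin\phi\ge \tfrac{2}{\pi}\phi$. This gives $\phi\le \tfrac{\pi}{2}\sin\phi\le \pi\sin\phi$, which even beats the claimed constant by a factor of two in this range.

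The only step needing care is the reduction to the plane when $y=\pm x$, where the plane is not unique. I handle it by the separate check above rather than by a limiting argument. There is no real obstacle: the constant $\pi$ is dictated by the near-antipodal regime with $r$ close to $0$, where the Euclidean distance is about $1$ and the geodesic distance is about $\pi$.
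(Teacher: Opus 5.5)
Your proof is correct, and it takes a somewhat different route from the paper. The first step is the same: you restrict to a great circle through $x$ and $y$. The paper then simply invokes Lemma~4.3 of Blumberg et al.\ for the circle case. You instead prove the planar inequality yourself, by minimizing $1-2r\cos\phi+r^2$ over $r\ge 0$ and applying Jordan's inequality.

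The paper's route is shorter because it defers the real work to the literature. Yours is self-contained and yields more:
\begin{itemize}
\item the hypothesis $r\le 1$ turns out to be unnecessary;
\item you get the better constant $\pi/2$ when $\phi\le\pi/2$;
\item the constant $\pi$ is attained (at $y=-x$, $r=0$), so it cannot be improved.
\end{itemize}

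Your identity $\|x-ry\|^2=1-2r\langle x,y\rangle+r^2$, with $\langle x,y\rangle=\cos d_{S^2}(x,y)$, holds directly in $\mathbb{R}^3$, and indeed on $S^n$ in any dimension. So in your argument the reduction to a plane is not needed, and neither is the separate treatment of $y=\pm x$. That separate check does, however, fix a small imprecision in the paper, which refers to ``the'' plane containing $x$ and $y$; that plane is not unique when $y=\pm x$.
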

 
\begin{proof}


Every pair of points on $S^2$ is contained in a great circle, and every great circle is canonically isometric to $S^1$.  Let $H \subset \mathbb{R}^{3}$ denote the plane through the origin containing $x$ and $y$.  Then $H \cap S^2$ is precisely this great circle.  Since the geodesic joining $x$ and $y$ is contained entirely $H \cap S^2$,
$
d_{S^2}(x,y) = d_{S^1}(x,y).
$
Applying Lemma 4.3 of \cite{blumberg2024subsampling} to this copy of
$S^1$ yields
$
d_{S^1}(x,y)
\le
\pi \|x-ry\|.
$
Since
$
d_{S^2}(x,y)=d_{S^1}(x,y),
$
the desired estimate follows.
\end{proof}


 

The previous lemma provides a pointwise comparison between the spherical geodesic distance and its Euclidean counterpart.  By summing these estimates over all landmarks and configurations, we obtain a corresponding comparison between the spherical and Euclidean Procrustes objectives.  This yields the following approximation result, which is the principal theoretical justification for our alignment procedure.  It shows that the solution of the Euclidean Procrustes problem provides a controlled approximation to the optimal spherical alignment, with an error bounded by the optimal spherical loss itself.

\begin{theorem}[Approximation of the spherical Procrustes solution by Euclidean alignment]
\label{prop:procrustes-approximation-sphere}
Let \( \Phi_{1}, \dots, \Phi_{M} \) be \( M \) configurations of \( n \) points on the sphere \( S^{2} \).  
Let \( ((g_{1}, \dots, g_{M}), \Theta) \) denote the solution to the generalized Procrustes problem on the sphere, and let 
\( ((g'_{1}, \dots, g'_{M}), \Psi) \) denote the solution to the \( O(3) \) Procrustes problem in 
\( \mathbb{R}^{3} \) for the same configurations.  
Let \( \Theta' \) be the projection of \( \Psi \) back to \( S^{2} \).  
Then
\[
  \inf_{h \in O(3)} d_{(S^{2})^{n}}(h \cdot \Theta, \Theta') 
  \;\le\; (1 + \pi) \, \sqrt{L^{*}},
\]
where \( L^{*} = L((g_{1}, \dots, g_{M}), \Theta) \) denotes the minimal value of the spherical Procrustes loss functional.
\end{theorem}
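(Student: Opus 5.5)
Following the proof of the analogous circular statement in \cite{blumberg2024subsampling}, the plan is a triangle-inequality argument in $(S^2)^n$ that compares both the spherical optimum $\Theta$ and the projected Euclidean centroid $\Theta'$ to a single aligned configuration. We work with the product metric $d_{(S^2)^n}(\Xi,\Xi')^2=\sum_{j} d_{S^2}(\Xi(j),\Xi'(j))^2$, and we normalize losses by $1/M$ as in the definition of $L$.

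\textbf{Step 1: the Euclidean loss is at most the spherical loss.} Let $L_E(g_1,\dots,g_M,\Psi)=\frac1M\sum_{i,j}\|g_i\Phi_i(j)-\Psi(j)\|^2$. Chordal distance never exceeds geodesic distance, so $\|x-y\|\le d_{S^2}(x,y)$ for $x,y\in S^2$. Hence
\[
L_E^*\le L_E((g_i),\Theta)\le L((g_i),\Theta)=L^*,
\]
using $SO(3)\subset O(3)$.

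\textbf{Step 2: bound the distance from $\Theta'$ to an aligned configuration.} Since $\Psi$ is the Euclidean centroid, each $\Psi(j)$ is the average of the unit vectors $g'_i\Phi_i(j)$, so $\|\Psi(j)\|\le 1$. Whenever $\Psi(j)\neq 0$ we write $\Psi(j)=r_j\Theta'(j)$ with $r_j\in[0,1]$. Lemma~\ref{lem:spherical-vs-euclidean-distance} then gives
\[
d_{S^2}\big(g'_i\Phi_i(j),\Theta'(j)\big)\le \pi\,\|g'_i\Phi_i(j)-\Psi(j)\|.
\]
The case $\Psi(j)=0$ is handled separately: either $\Theta'(j)$ is chosen arbitrarily and the bound still holds, since $d_{S^2}\le\pi=\pi\|x\|$, or we argue with $r=0$. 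Squaring and averaging over $i$ yields $\frac1M\sum_i d_{(S^2)^n}(g'_i\Phi_i,\Theta')^2\le \pi^2 L_E^*\le \pi^2L^*$. Therefore there exists an index $i_0$ with
\[
d_{(S^2)^n}(g'_{i_0}\Phi_{i_0},\Theta')\le \pi\sqrt{L^*},
\]
and averaging over $i$ directly, without extracting an index, gives the same bound in root-mean-square form.

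\textbf{Step 3: relate $\Theta$ to the same configuration and combine.} We need an index $i$ for which both $d(g'_i\Phi_i,\Theta')$ and $d(g_i\Phi_i,\Theta)$ are small. We set $h_i=g'_i g_i^{-1}\in O(3)$; rotations act isometrically on $(S^2)^n$, so $d(h_i\Theta, g'_i\Phi_i)=d(\Theta,g_i\Phi_i)$. The triangle inequality then gives
\[
\inf_h d(h\Theta,\Theta')\le d(\Theta,g_i\Phi_i)+d(g'_i\Phi_i,\Theta')
\]
for every $i$. Minimizing over $i$ is at most averaging over $i$ with respect to the uniform weights. By Cauchy--Schwarz (Jensen), $\frac1M\sum_i a_i\le(\frac1M\sum_i a_i^2)^{1/2}$, so the first term averages to at most $\sqrt{L^*}$ and the second to at most $\pi\sqrt{L^*}$. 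This yields $(1+\pi)\sqrt{L^*}$.

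The step that needs the most care is Step 2, specifically the bound $\|\Psi(j)\|\le1$. This holds only if $\Psi$ is the unconstrained least-squares centroid, which is the mean of the aligned vectors. The zero-norm case must be treated so that Lemma~\ref{lem:spherical-vs-euclidean-distance} applies with some $r\in[0,1]$. If instead the Euclidean problem were posed with $\Psi$ constrained differently, I would first reduce to the mean by noting that replacing $\Psi$ with the mean does not increase $L_E$. One point to check against the paper's convention is that the infimum over $O(3)$ rather than $SO(3)$ makes the choice $h_i=g'_ig_i^{-1}$ admissible even when $g'_i$ is a reflection.
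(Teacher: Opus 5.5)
Your proposal is correct and takes the same route as the paper. Both arguments use the triangle inequality through an aligned copy of $\Phi_i$, apply \cref{lem:spherical-vs-euclidean-distance} to pass from $\Theta'$ to $\Psi$, combine optimality of the Euclidean solution with chordal $\le$ geodesic to get $L_E^*\le L^*$, and finish by averaging over $i$. The differences are minor: you fix $h_i=g'_ig_i^{-1}$ and use the arithmetic mean with Jensen, whereas the paper takes infima over $h$ and applies Minkowski to the root-mean-square. Your explicit check that $\|\Psi(j)\|\le 1$, including the case $\Psi(j)=0$, justifies applying the lemma with some $r\in[0,1]$, a step the paper uses without comment.
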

 
\begin{proof}
By the triangle inequality and \cref{lem:spherical-vs-euclidean-distance},
\[
\begin{aligned}
  \inf_{h \in O(3)} d_{(S^{2})^{n}}(h \cdot \Theta, \Theta') 
  &= \inf_{h, h' \in O(3)} d_{(S^{2})^{n}}(h \cdot \Theta, h' \cdot \Theta') \\
  &\le \inf_{h, h'} \bigl\{
      d_{(S^{2})^{n}}(h \cdot \Theta, \Phi_{i}) +
      d_{(S^{2})^{n}}(h' \cdot \Theta', \Phi_{i})
    \bigr\} \\
  &= \inf_{h} d_{(S^{2})^{n}}(h \cdot \Phi_{i}, \Theta) +
     \inf_{h} d_{(S^{2})^{n}}(h \cdot \Phi_{i}, \Theta') \\
  &\le \inf_{h} d_{(S^{2})^{n}}(h \cdot \Phi_{i}, \Theta) +
        \inf_{h} \pi \cdot d_{(\mathbb{R}^{3})^{n}}(h \cdot \Phi_{i}, \Psi).
\end{aligned}
\]
Applying the triangle inequality in \( \mathbb{R}^M \) yields

\begin{align*}
  \inf_{h \in O(3)} d_{(S^2)^n}(h \cdot \Theta, \Theta') 
  &= \left(\frac{1}{M}\sum_{i=1}^{M}\left[\inf_{h \in O(3)} d_{(S^2)^n}(h \cdot \Theta, \Theta')\right]^2\right)^{1/2}\\
  &\leq \left( \frac{1}{M} \sum_{i=1}^{M}\left[\inf_{h} d_{(S^2)^n}(h \cdot \Phi_i, \Theta) + \inf_{h} \pi\cdot d_{(\mathbb{R}^3)^n}(h \cdot \Phi_i, \Psi) \right]^2 \right)^{1/2}\\
  &\le 
  \left( \frac{1}{M} \sum_{i=1}^{M} \left[\inf_{h} d_{(S^2)^n}(h \cdot \Phi_i, \Theta)\right]^2 \right)^{1/2} \\
  & \qquad \qquad \qquad \qquad \qquad 
  + \pi \left( \frac{1}{M} \sum_{i=1}^{M} \left[\inf_{h} d_{(\mathbb{R}^3)^n}(h \cdot \Phi_i, \Psi)\right]^2 \right)^{1/2}.
\end{align*}

The first term equals \( \sqrt{L^*} \) by definition.  
For the second term, let \( L_{\mathbb{R}^3} \) denote the Euclidean Procrustes loss:
\[
  L_{\mathbb{R}^{3}}((h_1, \dots, h_M), \Xi)
  = \frac{1}{M} \sum_{i=1}^{M} d_{(\mathbb{R}^{3})^n}(h_i \cdot \Phi_i, \Xi)^2.
\]
By optimality of \( ((g'_1, \dots, g'_M), \Psi) \),
\[
  \frac{1}{M} \sum_{i=1}^{M} \inf_{h} d_{(\mathbb{R}^{3})^n}(h \cdot \Phi_i, \Psi)^2
  = L_{\mathbb{R}^{3}}((g'_1, \dots, g'_M), \Psi)
  \le L_{\mathbb{R}^{3}}((g_1, \dots, g_M), \Theta).
\]
Since \( d_{\mathbb{R}^3} \le d_{S^2} \) on \( S^2 \),
\[
  L_{\mathbb{R}^{3}}((g_1, \dots, g_M), \Theta)
  \le L((g_1, \dots, g_M), \Theta) = L^*.
\]
Combining these bounds gives
\[
  \inf_{h \in O(3)} d_{(S^2)^n}(h \cdot \Theta, \Theta') 
  \le \sqrt{L^*} + \pi \sqrt{L^*}
  = (1 + \pi) \sqrt{L^*},
\]
as required.
\end{proof}

\section{Experiments}
\label{sec:experiments}
This section presents an experimental evaluation of the proposed method.  After describing the implementation and experimental setup, we evaluate its computational efficiency and the robustness of the recovered spherical coordinates under both uniform and non-uniform sampling.

\subsection{Experimental Setup}
\paragraph{Terminology} Throughout the experiments we distinguish three constructions. We refer to the original spherical coordinate method of Schonsheck and Schonsheck~\cite{schonsheck2024spherical} as the \emph{baseline}; to the same method equipped with the two implementation accelerations described below as the \emph{optimized baseline}; and to the density-robust construction of \cref{sec:density_robust_spherical}, which augments the optimized baseline with rejection sampling and spherical alignment, as the \emph{subsampling method}. Since the two accelerations affect only the implementation, the baseline and optimized baseline compute identical coordinates and differ only in runtime.

\paragraph{Implementation} Our implementation builds on the spherical coordinate construction of Schonsheck and Schonsheck~\cite{schonsheck2024spherical} and the subsampling framework of Blumberg et al.~\cite{blumberg2024subsampling}. Persistent cohomology is computed using \texttt{Ripser}~\cite{bauer2021ripser}, while spherical coordinates are constructed via the variational optimization procedure described in \cref{sec:spherical_coords}. Since this optimization is performed repeatedly across many subsamples, computational efficiency is essential for the scalability of the method. We therefore introduced two implementation optimizations. First, the construction of $2$-simplices in the Vietoris--Rips complex is accelerated by replacing brute-force triangle enumeration with a sparse adjacency-based algorithm using intersections of neighborhood sets. Second, the variational optimization routine is accelerated through just-in-time compilation using \texttt{numba}~\cite{lam2015numba}. These modifications affect only computational efficiency and leave the underlying coordinate construction unchanged. 

The alignment stage follows the two-stage procedure developed in \cref{sec:density_robust_spherical}. An initial alignment is obtained by solving the orthogonal Procrustes problem in $\mathbb{R}^3$ using the method of ten Berge~\cite{tenberge1986}, providing a global initialization for the rotational degrees of freedom. This initialization is subsequently refined by solving the spherical Procrustes problem using Fréchet means~\cite{petersen2016riemannian,miolane2020geomstats} together with constrained optimization on $SO(3)$~\cite{virtanen2020scipy}. This combines the computational efficiency of Euclidean alignment with the intrinsic geometry of the sphere and provides stable convergence throughout the experiments.

\paragraph{Experimental protocol} All experiments were run on a single desktop machine with an AMD Ryzen~5 7600X
CPU and 32\,GB of RAM under Debian GNU/Linux~12; although a GPU was present, none
of the reported computations used GPU acceleration. Runtimes were measured with
Python's \texttt{time} module. To support reproducibility, every point cloud was
generated from a fixed random seed, and each runtime measurement is reported as
the mean over five independent trials, with shaded regions in the corresponding
figures indicating one standard deviation.
 
The synthetic data fall into two groups, matching the two evaluation goals. For the runtime experiments of \cref{sec:runtime}, point clouds are sampled uniformly from the unit sphere and perturbed with Gaussian noise. For the coordinate-quality experiments of \cref{sec:quality}, we additionally generate non-uniform samples whose density is deliberately biased towards specific regions of the sphere using von Mises--Fisher distributions and related constructions; the exact distributions and their parameters are stated alongside each experiment. Throughout, recovered coordinates are evaluated by correlating the computed azimuth and elevation angles against the ground-truth angles of the underlying sphere, generalizing the single-angle correlation used to assess circular coordinates.
 
\subsection{Runtime and Scalability}
\label{sec:runtime}
 
We evaluate the computational performance of the proposed method in two stages. We first quantify the effect of the implementation optimizations on the baseline construction before assessing the runtime and scalability of the subsampling method.
 
\paragraph{Implementation optimizations}  The two accelerations leave the computed coordinates unchanged and affect only
runtime. \cref{fig:runtime-components} isolates their individual effect.
Just-in-time compilation accelerates the variational optimization stage, from
roughly $2300$\,s to $20$\,s at $200$ points, while replacing brute-force
triangle enumeration with the sparse, adjacency-based construction reduces the
cost of building $2$-simplices from cubic to roughly quadratic scaling; the
latter dominates at larger sizes, falling from about $30{,}000$\,s to one second
at $500$ points. Together (\cref{fig:runtime-full}) they reduce the full
pipeline at $500$ points from over ten and a half hours to under four minutes,
a gain of more than two orders of magnitude.
 
These speed-ups do not, however, extend the range of the baseline. Computing
degree-two cohomology requires storing all simplices up to dimension three,
whose number grows as $\mathcal{O}(n^4)$; on the $32$\,GB machine used here the
computation exhausts memory at roughly $850$ points, and extrapolation suggests
that even $1$\,TB would reach only about $3000$ points. This limitation is intrinsic to the construction rather than its implementation: faster code cannot remove it, and beyond a few hundred points the baseline becomes infeasible regardless of optimization.

\begin{figure}[htbp]
    \centering
    \begin{subfigure}{0.48\textwidth}
        \centering
        \includegraphics[width=\textwidth]{figures/runtime_optimization_only.png}
        \caption{Optimization stage.}
        \label{fig:runtime-optimization}
    \end{subfigure}
    \hfill
    \begin{subfigure}{0.48\textwidth}
        \centering
        \includegraphics[width=\textwidth]{figures/runtime_triangle_only.png}
        \caption{Triangle construction.}
        \label{fig:runtime-triangles}
    \end{subfigure}
    \caption{Effect of the two accelerations in isolation, comparing the
    baseline with the optimized baseline.}
    \label{fig:runtime-components}
\end{figure}
 
\begin{figure}[htbp]
    \centering
    \begin{subfigure}{0.48\textwidth}
        \centering
        \includegraphics[width=\textwidth]{figures/runtime_full_comparison.png}
        \caption{Total runtime.}
        \label{fig:runtime-total}
    \end{subfigure}
    \hfill
    \begin{subfigure}{0.48\textwidth}
        \centering
        \includegraphics[width=\textwidth]{figures/runtime_absolute_savings.png}
        \caption{Absolute time saved.}
        \label{fig:runtime-saved}
    \end{subfigure}
    \caption{Overall runtime of the baseline and the optimized baseline on the
    full spherical coordinate pipeline.}
    \label{fig:runtime-full}
\end{figure}
 
\paragraph{Scalability of the subsampling method}
The subsampling method removes this limitation. \cref{fig:improved-vs-new}
compares the two directly on inputs of $100$ to $800$ points: the optimized
baseline is faster on small clouds, but its runtime climbs with the point count
while the subsampling method stays nearly flat, so the two cross at roughly
$400$ points and the gap then widens in the latter's favor. Because spherical coordinates are computed on fixed-size subsamples, runtime depends primarily on the number and size of the subsamples rather than on the total number of input points. Consequently, across datasets ranging from 500 to 10,000 points, runtime remains nearly constant (\cref{fig:pipeline-scaling}). Peak memory usage exhibits the same behavior: each subsample is processed independently before being discarded, so memory depends only on the subsample size rather than on the size of the full dataset. The per-subsample cost is still that of the underlying spherical coordinate construction. Consequently, the implementation optimizations described above remain essential, since they are applied once for every subsample.

Subsampling likewise resolves the memory bottleneck. Each subsample is processed
independently and its complex discarded before the next, so peak memory depends
only on the subsample size and stays fixed as the dataset grows. The
$\mathcal{O}(n^4)$ ceiling therefore disappears, and the method applies to
arbitrarily large inputs within a fixed memory budget. Simply lowering the
Vietoris--Rips threshold would also bound the complex, but at the cost of
discarding the topological features the coordinates depend on; subsampling
avoids this trade-off.
 
\begin{figure}[htbp]
    \centering
    \begin{subfigure}[b]{0.48\textwidth}
        \includegraphics[width=\textwidth]{figures/runtime_improved_vs_new.png}
        \caption{Optimized baseline vs.\ subsampling method ($100$ to $800$
        points; subsampling: $30$ subsamples of size $100$). The two cross at
        roughly $400$ points.}
        \label{fig:improved-vs-new}
    \end{subfigure}
    \hfill
    \begin{subfigure}[b]{0.48\textwidth}
        \includegraphics[width=\textwidth]{figures/runtime_new_large.png}
        \caption{Subsampling method on large datasets ($500$ to $10{,}000$
        points) under several configurations; the cost is effectively
        independent of the total number of points.}
        \label{fig:pipeline-scaling}
    \end{subfigure}
    \caption{Runtime scaling of the subsampling method: a direct comparison
    against the optimised baseline on moderate inputs, and its near-constant
    runtime on large inputs.}
    \label{fig:runtime-scaling}
\end{figure}
 
\begin{figure}[htbp]
    \centering
    \includegraphics[width=0.5\textwidth]{figures/runtime_heatmap.png}
    \caption{Runtime of the subsampling method as a function of subsample size
    and number of subsamples.}
    \label{fig:runtime-heatmap}
\end{figure}

\subsection{Coordinate Quality and Robustness}
\label{sec:quality}
 
We now turn from runtime to the quality of the recovered coordinates. Our main goal is to prove the robustness of the construction under non-uniform sampling where the baseline degrades, while our subsampling method still recovers accurate coordinates. We establish this progressively. We begin with a control experiment on uniformly sampled data, confirming that the additional alignment and averaging introduce no degradation relative to the baseline. We then demonstrate robustness under non-uniform sampling before concluding with a stress test under increasingly severe sampling bias.
 
\paragraph{Uniform sampling}

As a control, we confirm that the alignment and averaging steps introduce no distortion when the sampling is already uniform. We draw $250$ points uniformly from the sphere (\cref{fig:uniform-input}), add Gaussian noise of standard
deviation $0.05$, and compare the baseline against the subsampling method in two configurations: $20$ subsamples of size $75$ and $30$ of size $100$. All three recover the ground-truth azimuth and elevation comparably (\cref{fig:uniform-correlation}), with no appreciable difference between the two subsampling configurations. The additional subsampling, alignment, and averaging therefore introduce no measurable loss in coordinate quality under uniform sampling.
 
\begin{figure}[htbp]
    \centering
    
    \begin{subfigure}{0.35\textwidth}
        \centering
        \includegraphics[
            width=\linewidth,
            trim={0 1.1cm 0 1.1cm},
            clip
        ]{figures/uniform_input.png}
        \caption{Input: Uniformly sampled point cloud on $S^2$.}
        \label{fig:uniform-input}
    \end{subfigure}
    
    \vspace{0.5em}
    
    \begin{subfigure}[b]{0.3\textwidth}
        \includegraphics[width=\textwidth]{figures/uniform_old_azimuth.png}
        \caption{Baseline: azimuth}
    \end{subfigure}
    \hfill
    \begin{subfigure}[b]{0.3\textwidth}
        \includegraphics[width=\textwidth]{figures/uniform_20x75_azimuth.png}
        \caption{Subsampling $20\times75$: azimuth}
    \end{subfigure}
    \hfill
    \begin{subfigure}[b]{0.3\textwidth}
        \includegraphics[width=\textwidth]{figures/uniform_30x100_azimuth.png}
        \caption{Subsampling $30\times100$: azimuth}
    \end{subfigure}
    
    \begin{subfigure}[b]{0.3\textwidth}
        \includegraphics[width=\textwidth]{figures/uniform_old_elevation.png}
        \caption{Baseline: elevation}
    \end{subfigure}
    \hfill
    \begin{subfigure}[b]{0.3\textwidth}
        \includegraphics[width=\textwidth]{figures/uniform_20x75_elevation.png}
        \caption{Subsampling $20\times75$: elevation}
    \end{subfigure}
    \hfill
    \begin{subfigure}[b]{0.3\textwidth}
        \includegraphics[width=\textwidth]{figures/uniform_30x100_elevation.png}
        \caption{Subsampling $30\times100$: elevation}
    \end{subfigure}
 
    \caption{Uniform sampling control experiment. Top: uniform sample over $S^2$. Middle and bottom rows: recovered azimuth and elevation angles (respectively), each compared against the ground truth. Columns in these rows correspond to the baseline method and the two subsampling configurations.  The subsampling method performs comparably to the baseline in both angles, in both configurations.}
    \label{fig:uniform-correlation}
\end{figure}

\paragraph{Robustness to non-uniform sampling}
This is our main experiment. We sample $300$ points from a von Mises--Fisher distribution at the north pole with concentration $\kappa = 1.3$ (\cref{fig:nonuniform-input}), which has density concentrated near the pole and
sparse near the equator, and apply the baseline and the subsampling method with $20$ subsamples of size $75$ to the same data. In this case, we observe a sharp separation in behaviour: the baseline fails to recover the spherical angles, while the subsampling method stays accurate in both angles (\cref{fig:nonuniform-embeddings}). This phenomenon is explained by the normalization step used in the classical spherical coordinate construction. The observed distortion is a direct consequence of the definition. Recall that the baseline's energy functional does not have a unique minimizer: its minimizers form an equivalence class under the conformal (M\"obius) self-maps of $S^2$, which preserve the energy. To single out one representative, the baseline method requires the recovered map to have zero center of mass, that is, the image points $\Phi(x_i)$ to average to the origin of $\mathbb{R}^3$ (\cref{sec:spherical_coords}). For uniformly sampled data this is essentially free: the image is spread evenly over the sphere and is already balanced about the origin, which is why the control experiment above is unaffected. A von Mises--Fisher sample, however, concentrates around its mean direction (here, the north pole) so the recovered image concentrates there too and its center of mass points toward the pole, far from the origin. Enforcing the constraint then drags this center of mass back to the origin through a non-rigid M\"obius transformation, and because such maps are conformal but not isometric, they stretch and compress the sphere unevenly; the resulting distortion is the bias visible in \cref{fig:nonuniform-embeddings}. 
The subsampling method, however, avoids this because each subsample is approximately uniform, so on each one the constraint is essentially satisfied already and the per-subsample coordinates are unbiased. This is preserved through the extension and alignment to combine them into a single global coordinate.

\begin{figure}[htbp]
    \centering

    \begin{subfigure}{0.35\textwidth}
        \centering
        \includegraphics[
            width=\linewidth,
            trim={0 1.2cm 0 1.2cm},
            clip
        ]{figures/mises_fisher_input.png}
        \caption{Input: von Mises--Fisher sample centered at $(0,0,1)$.}
        \label{fig:nonuniform-input}
    \end{subfigure}
    
    \vspace{0.5em}
    
    \begin{subfigure}[b]{0.3\textwidth}
        \centering
        \includegraphics[width=\textwidth]{figures/nonuniform_old_embedding.png}
        \caption{Baseline: embedding}
    \end{subfigure}
    \hfill
    \begin{subfigure}[b]{0.3\textwidth}
        \centering
        \includegraphics[width=\textwidth]{figures/nonuniform_old_azimuth.png}
        \caption{Baseline: azimuth}
    \end{subfigure}
    \hfill
    \begin{subfigure}[b]{0.3\textwidth}
        \centering
        \includegraphics[width=\textwidth]{figures/nonuniform_old_elevation.png}
        \caption{Baseline: elevation}
    \end{subfigure}

    \begin{subfigure}[b]{0.3\textwidth}
        \centering
        \includegraphics[width=\textwidth]{figures/nonuniform_new_embedding.png}
        \caption{Subsampling: embedding}
    \end{subfigure}
    \hfill
    \begin{subfigure}[b]{0.3\textwidth}
        \centering
        \includegraphics[width=\textwidth]{figures/nonuniform_new_azimuth.png}
        \caption{Subsampling: azimuth}
    \end{subfigure}
    \hfill
    \begin{subfigure}[b]{0.3\textwidth}
        \centering
        \includegraphics[width=\textwidth]{figures/nonuniform_new_elevation.png}
        \caption{Subsampling: elevation}
    \end{subfigure}

    \caption{Embeddings and angle recovery on the non-uniform dataset. Top:
    input sampled from a von Mises--Fisher distribution centered at $(0,0,1)$.
    Middle: baseline spherical coordinates. Bottom: the proposed subsampling
    method. The baseline suffers from strong distortion caused by the sampling
    bias, while the subsampling method maintains accurate recovery of both
    spherical angles.}
    \label{fig:nonuniform-embeddings}
\end{figure}
 
\paragraph{Severe sampling bias}
Finally, we push the imbalance further to test how far the robustness extends. We generate three clouds of $1{,}500$ points and apply the subsampling method with $30$ subsamples of size $120$ to each: a von Mises--Fisher distribution with $\kappa = 2$ clustered at the pole, a distribution concentrated in an equatorial band, and a bimodal one split between the two poles. The recovered angles track the ground truth in all three cases (\cref{fig:strong-bias}), so the method preserves spherical structure even under severe density variation: regardless of how the sampling density is distributed, rejection sampling restores approximate uniformity within each subsample, and the per-subsample coordinates therefore remain unbiased.
 
\begin{figure}[htbp]
    \centering
    \begin{subfigure}[b]{0.32\textwidth}
        \includegraphics[width=\textwidth]{figures/vmf_k2_input.png}
        \caption{VMF $\kappa=2$: input}
    \end{subfigure}
    \hfill
    \begin{subfigure}[b]{0.32\textwidth}
        \includegraphics[width=\textwidth]{figures/vmf_k2_azimuth.png}
        \caption{VMF $\kappa=2$: azimuth}
    \end{subfigure}
    \hfill
    \begin{subfigure}[b]{0.32\textwidth}
        \includegraphics[width=\textwidth]{figures/vmf_k2_elevation.png}
        \caption{VMF $\kappa=2$: elevation}
    \end{subfigure}
 
    \begin{subfigure}[b]{0.32\textwidth}
        \includegraphics[width=\textwidth]{figures/equator_biased_input.png}
        \caption{Equator: input}
    \end{subfigure}
    \hfill
    \begin{subfigure}[b]{0.32\textwidth}
        \includegraphics[width=\textwidth]{figures/equator_biased_azimuth.png}
        \caption{Equator: azimuth}
    \end{subfigure}
    \hfill
    \begin{subfigure}[b]{0.32\textwidth}
        \includegraphics[width=\textwidth]{figures/equator_biased_elevation.png}
        \caption{Equator: elevation}
    \end{subfigure}
 
    \begin{subfigure}[b]{0.32\textwidth}
        \includegraphics[width=\textwidth]{figures/bipolar_input.png}
        \caption{Poles: input}
    \end{subfigure}
    \hfill
    \begin{subfigure}[b]{0.32\textwidth}
        \includegraphics[width=\textwidth]{figures/bipolar_azimuth.png}
        \caption{Poles: azimuth}
    \end{subfigure}
    \hfill
    \begin{subfigure}[b]{0.32\textwidth}
        \includegraphics[width=\textwidth]{figures/bipolar_elevation.png}
        \caption{Poles: elevation}
    \end{subfigure}
    
    \vspace{0.5em}
    \caption{Performance of the subsampling method under three strongly
    non-uniform sampling distributions. In all cases both azimuth and elevation
    are recovered with high accuracy despite the uneven density.}
    \label{fig:strong-bias}
\end{figure}

\paragraph{Summary} Overall, these experiments validate the theoretical motivation underlying the proposed subsampling construction. Under uniform sampling, the method performs comparably to the baseline, while under increasingly non-uniform sampling it remains accurate by restoring approximate uniformity before computing spherical coordinates. In doing so, it avoids the normalization bias that affects the baseline and yields robust spherical coordinates across a wide range of sampling distributions.

\section{Conclusion}
\label{sec:conclusion}

We have developed the first density-robust construction of spherical topological coordinates by extending the subsampling-and-alignment framework for circular topological coordinates from $S^1$ to $S^2$. The principal theoretical challenge was to combine coordinates computed independently on different subsamples. We resolved this by introducing a spherical Procrustes problem together with an approximation theorem (\cref{prop:procrustes-approximation-sphere}) showing that its Euclidean relaxation remains close to the intrinsic spherical optimum, enabling consistent averaging of sphere-valued coordinates across subsamples. Our resulting construction achieves density robustness without modifying the underlying cohomological pipeline: it computes baseline spherical coordinates on approximately uniform subsamples and aligns them into a single consensus map. More broadly, these results show that density robustness in the spherical setting can be achieved through a principled geometric alignment of independently computed coordinates, rather than by modifying the underlying topological construction itself.
 
On the practical side, two implementation changes (a sparse, adjacency-based
triangle construction and just-in-time compilation) reduce the runtime of the
baseline by more than two orders of magnitude, while subsampling removes the
$\mathcal{O}(n^4)$ memory ceiling that otherwise makes degree-two computations
infeasible beyond a few hundred points. On synthetic data, the resulting
coordinates match the baseline under uniform sampling while remaining accurate under increasingly non-uniform sampling regimes in which the baseline exhibits substantial distortion. 
 
Several limitations remain. The density estimator is based on local point counts
and can be misled by outliers or isolated points, causing subsamples to
over-represent sparse regions; a more robust estimator would address this. The
baseline's zero centre-of-mass constraint can also still bias the result when the
data, though uniformly resampled, map non-uniformly onto the sphere, for
instance when the underlying shape is not a single sphere. A heuristic that
removes low-area triangles before optimization, suggested by Schonsheck and
Schonsheck~\cite{schonsheck2024spherical} and implemented in our code, mitigates
this in practice, though it lacks a theoretical justification. More fundamentally,
the method's robustness is at present empirical rather than topological: unlike
the weighting-based circular construction of Paik and Park~\cite{paik2023circular},
which is grounded in the convergence of the discrete to the continuous Laplacian,
the subsampling construction lacks an analogous analytical justification, and whether such a guarantee can be established remains open.

More broadly, our work demonstrates that the density-robust subsampling paradigm is not confined to degree-one cohomology, but extends naturally to sphere-valued coordinates arising from degree-two cohomology. This perspective provides a foundation for developing density-robust topological coordinates associated with higher-dimensional and more general target spaces. A natural next step is the setting of projective coordinates. Since $S^2\cong\mathbb{CP}^1$, our construction may be viewed as the first nontrivial instance of a broader projective framework, suggesting that the ideas developed here could ultimately unify spherical and projective topological coordinates within a common cohomological theory.

\section*{Software and Data Availability}%
An implementation of the method, together with the scripts that reproduce the
experiments and figures in this paper, is available at
\texttt{\url{https://github.com/NickNordwald/SphericalCoordinates}}.

\section*{Acknowledgments}
We wish to thank Vin de Silva, Nikolas Schonsheck, and Qiquan Wang for helpful discussions. We would also like to thank Jun Hou Fung for providing the code accompanying \cite{blumberg2024subsampling}.  
I.G.R.~has received funding from the Swiss State Secretariat for Education, Research, and Innovation (SERI). The funders had no role in the preparation of the manuscript or the decision to publish.
A.M.~is supported by the EPSRC AI Hub on Mathematical Foundations of Intelligence: An ``Erlangen Programme'' for AI No.\ EP/Y028872/1.


\printbibliography

\end{document}